\newtheorem{prethm}{{\bf Theorem}}[section]
\newenvironment{thm}{\begin{prethm}{\hspace{-0.5
em}{\bf.}}}{\end{prethm}}
\newtheorem{prepro}{{\bf Theorem}}
\newtheorem{precor}[prethm]{{\bf Corollary}}
\newenvironment{cor}{\begin{precor}{\hspace{-0.5
em}{\bf.}}}{\end{precor}}
\newtheorem{preconj}[prethm]{{\bf Conjecture}}
\newenvironment{conj}{\begin{preconj}{\hspace{-0.5
em}{\bf.}}}{\end{preconj}}
\newtheorem{preremark}[prethm]{{\bf Remark}}
\newenvironment{remark}{\begin{preremark}\em{\hspace{-0.5
em}{\bf.}}}{\end{preremark}}
\newtheorem{prelem}[prethm]{{\bf Lemma}}
\newenvironment{lem}{\begin{prelem}{\hspace{-0.5
em}{\bf.}}}{\end{prelem}}
\newtheorem{preque}[prethm]{{\bf Question}}
\newtheorem{preobserv}[prethm]{{\bf Observation}}
\newtheorem{predef}[prethm]{{\bf Definition}}
\newtheorem{preproposition}[prethm]{{\bf Proposition}}
\newtheorem{preproof}{{\bf Proof.}}
\newtheorem{preprooff}{{\bf Proof}}
\newenvironment{proof}[1]{\begin{preproof}{\rm
#1}\hfill{$\Box$}}{\end{preproof}}
\newtheorem{preproofs}{{\bf The second proof of }}
\newtheorem{preprooft}{{\bf Third proof of }}
\newtheorem{preproofF}{{\bf Proof of}}
\title{\bf\Large 
Edge-decompositions of $O(m)$-edge-connected graphs into isomorphic copies of a fixed tree of size $m$
%
%
%
}
\author{
{\normalsize{\sc Morteza Hasanvand${}$}
}\vspace{3mm}
\\{\footnotesize{${}$\it Department of Mathematical
 Sciences, Sharif
University of Technology, Tehran, Iran}}
{\footnotesize{}}\\{\footnotesize{ $\mathsf{morteza.hasanvand@alum.sharif.edu }$ }}}
\date{}
\begin{document}
\maketitle
\begin{abstract}{
In this paper, we show that every $O(m)$-edge-connected simple graph $G$ of size divisible by $m$ with minimum degree at least $2^{O(m)}$ has an edge-decomposition into isomorphic copies of any given tree $T$ of size $m$. Moreover, the minimum degree condition can be dropped for graphs $G$ with girth greater than the diameter of $T$. These results improve two results due to Bensmail, Harutyunyan, Le, Merker, and Thomass\'e (2017) and Merker (2017) who gave a factorial upper bound on the necessary edge-connectivity.
\\
\\
\noindent {\small {\it Keywords}: Tree-decomposition; partition-connectivity; edge-connectivity; minimum degree.}} {\small
}
\end{abstract}
%
%
%
%
%
%
%
%
%
%
%
%
\section{Introduction}
In this paper, graphs may have multiple edges but loops are avoided and simple graphs have neither multiple edges nor loops.
Let $G$ be a graph. The vertex set, the edge set, and the minimum degree of $G$ are denoted by $V(G)$, $E(G)$, and $\delta(G)$.
The degree $d_G(v)$ of a vertex $v$ is the number of edges of $G$ incident to $v$.
The {\it girth} of $G$ is the size of a shortest cycle that is denoted by ${\it girth(G)}$.
If $G$ has no cycle, the girth is defined to be infinity.
Likewise, we denoted by ${\it diam(T)}$ the {\it diameter} of $G$ that is the maximum distance between any two vertices of $G$.
If $G$ is disconnected, the diameter is defined to be infinity. 
Note that when $G$ is a tree, the diameter is the size of a longest path.
For a partition $P$ of $V(G)$, we denote by $e_G(P)$ the number of edges of $G$ connecting different parts of $P$.
A {\it factor} of a graph refers to a spanning subgraph, and an {\it even factor} is a spanning subgraph with even degrees.
For a graph $G$, we denote by $G[X,Y]$ the bipartite induced factor of $G$ with the bipartition $(X,Y)$.
In addition, we denote by $G[X]$ the induced factor with the vertex set $X$.
The {\it bipartite index $bi(G)$} of a graph $G$ refers to the minimum number of edges for deleting to make the graph bipartite.
For a graph $G$, we say that $G$ is {\it modulo $m$-regular}, if its vertices have degrees divisible by $m$.
For a bipartite graph $G$, we say that $G$ is {\it modulo semi-$m$-regular}, if vertices of one side have degrees divisible by $m$.
A graph $G$ is said to be
{\it $m$-edge-connected}, if for any nonempty proper vertex set $A$,
 $d_G(A) \ge m$, where $d_G(A)$ denotes the number of edges of $G$ with exactly one end in $A$.
Likewise, a graph $G$ is said to be {\it odd-$m$-edge-connected}, if for any vertex set $A$ with $d_G(A)$ odd, $d_G(A) \ge m$.
A graph $G$ is said to be {\it $m$-tree-connected}, if it has $m$ edge-disjoint spanning trees.
By the result of Nash-Williams and Tutte~\cite{Nash-Williams-1961, Tutte-1961}, every $2m$-edge-connected graph is also $m$-tree-connected.
Likewise, a graph $G$ is said to be
{\it $(m,l)$-partition-connected}, if it can be edge-decomposed into an $m$-tree-connected factor and a factor $F$ having an orientation such that for each vertex $v$, $d^+_F(v)\ge l(v)$, where $l$ is a nonnegative integer-valued function on $V(G)$ and $d^+_F(v)$ denotes the out-degree of $v$ in $F$.
Note that an $(m_1+m_2, l_1+l_2)$-partition-connected graph can be edge-decomposed into an $(m_1, l_1)$-partition-connected factor and an $(m_2, l_2)$-partition-connected factor.
For two integers $x$ and $y$, we say that $x\stackrel{k}{\equiv}y$, if $x-y$ is divisible by $k$.
Let $T$ be a tree.
A vertex in $T$ is said to be {\it internal} if it has degree at least two. We denote by $I(T)$ the set of internal vertices of $T$.
A {\it homomorphic} copy $T'$ of $T$ is a graph obtained from $T$ by identifying some of its vertices.
More precisely, there must be a surjective mapping $f : V (T)\rightarrow V(T')$ such that $uv$ is an edge of $T$ if and only if 
$f(u)f(v)$ is an in edge of $T'$.
We say that a graph $G$ admits a {\it $T^*$-edge-decomposition}, if the edges of $G$ can be decomposed into homomorphic copies of $T$. If $f$ is bijective, then $T'$ is also called an isomorphic copy of $T$.
 We say that a graph $G$ admits a {\it $T$-edge-decomposition}, if the edges of $G$ 
can be decomposed into isomorphic copies of $T$.
If $\mathcal{A}$ is a $T^*$-edge-decomposition (or $T$-edge-decomposition), 
for any pair $v\in V(G)$ and $t\in V(T)$, we denote by $\mathcal{A}(v, t)$ the set of all homomorphic copies in $\mathcal{A}$ including $v$ with the preimage $t$.
In addition, we denote by $\mathcal{A}(v)$ the set of all of homomorphic copies in $\mathcal{A}$ including $v$.
Let $(A,B)$ be the bipartition of $T$.
For a bipartite graph $G$ with bipartition $(X,Y)$, we say that $G$ admits 
a {\it $(B,Y)$-compatible} $T$-edge-decomposition (resp. $T^*$-edge-decomposition), 
if every copy of $T$ its vertices in $Y$ are image of vertices in $B$.
Consider an edge-coloring of $T$ with $m$ colors $1,\ldots, m$, where $m=|E(T)|$. 
Assume that $G$ admits a $(B,Y)$-compatible $T^*$-edge-decomposition. Let $F_1,\ldots, F_m$ be a factorization of $G$ such that every factor $F_i$ consists of all edges in $G$ having the color $i$ in a copy of $T$. 
It is not difficult to see that for all $v\in Y$,
$d_{F_i}(v)=d_{F_j}(v)$, where $i$ and $j$ are the colors of two arbitrary edges in $T$ incident with a common vertex in $B$.
Likewise, for all $v\in X$,
$d_{F_i}(v)=d_{F_j}(v)$, where $i$ and $j$ are the colors of two arbitrary edges in $T$ incident with a common vertex in $A$.
We call such a factorization {\it $(B,Y)$-compatible $T$-equitable factorization} or (briefly $T$-equitable factorization).
It was proved in~\cite[Lemma 3.2]{Merker-2017} that a bipartite graph admits a compatible $T$-equitable factorization if and only if it admits a compatible $T^*$-edge-decomposition. Throughout this article, all trees are nontrivial, all variables $\lambda$ are nonnegative integers, and all variables $b$ and $k$ are positive integers.

%

In 2006 Bar{\'a}t and Thomassen proposed the following interesting conjecture on graph edge-decompositions.
\begin{conj}{\rm (\cite{Barat-Thomassen-2006})}\label{intro:conj:Barat-Thomassen}
{For every tree $T$ of size $m$, there exists a positive integer $k_T$ such that every $k_T$-edge-connected simple graph of size divisible by $m$ admits a $T$-edge-decomposition.
}\end{conj}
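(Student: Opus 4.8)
The conjecture has since been confirmed, with an exponential value of $k_T$, by Bensmail, Harutyunyan, Le, Merker and Thomass\'e; here is the route I would take, which is also the one the present paper refines. The first step is to pass to the ``partition-connected'' reformulation. A $k_T$-edge-connected graph is $\lfloor k_T/2\rfloor$-tree-connected by Nash--Williams and Tutte~\cite{Nash-Williams-1961, Tutte-1961}, and every one of its vertices has degree at least $k_T$; reserving a large bundle of its edge-disjoint spanning trees and taking their union as an auxiliary factor $F$ --- which, being highly edge-connected, carries an orientation with every out-degree as large as we please --- exhibits $G$ as $(m,l)$-partition-connected with $m$ and $l$ as large as we like in terms of $|E(T)|$, at the price of taking $k_T$ correspondingly large. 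So it suffices to prove: an $(m,l)$-partition-connected simple graph $G$ with $|E(T)|\mid|E(G)|$, and with $m$ and $l$ large in terms of $|E(T)|$, admits a $T$-edge-decomposition.

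For the reduced statement I would use the absorption method, keeping the bipartition $(A,B)$ of $T$ in view throughout. First set aside a \emph{reservoir}: a block of the edge-disjoint spanning trees whose number is bounded in terms of $|E(T)|$ (and chosen so that its edge count is a multiple of $|E(T)|$), together with a thin slice of the orientable factor; this reservoir is frozen until the very end, and the graph $G'$ that remains is still $(m',l')$-partition-connected with $m',l'$ large and $|E(G')|$ still divisible by $|E(T)|$. Phase one is a \emph{rough decomposition} of $G'$: re-orient its orientable factor --- reversing short directed paths, as the high connectivity permits --- until every out-degree is divisible by $|E(T)|$, then use this balanced out-structure to pull out a large family of edge-disjoint stars of the sizes occurring at the non-leaf vertices of $T$, and use the many spanning trees to link these stars up into copies of $T$, all the while balancing the $A$-roles against the $B$-roles so the bookkeeping stays consistent; this should exhaust $G'$ down to a small remainder $R$ confined to the reservoir together with a bounded ``error'' graph. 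Phase two is the \emph{absorption}: a union of many edge-disjoint spanning trees is flexible enough that through any prescribed edge it contains an isomorphic copy of $T$, and abundantly many such copies that are pairwise edge-disjoint; so every leftover edge of $R$ is completed to a copy of $T$ using fresh reservoir edges, and whatever survives of the reservoir --- still highly tree-connected and, by the global divisibility, of size divisible by $|E(T)|$ --- is cleared by the Phase one argument, now with no remainder because enough slack was deliberately left.

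The main obstacle is exactly the seam between the two phases: unlike stars, copies of $T$ are spread-out objects, so extracting one disturbs the graph in several places at once, and one must guarantee that the accumulated disturbance stays bounded and remains repairable from the reservoir. Quantitatively the two tight points are (i) producing an orientation whose out-degrees are simultaneously large and divisible by $|E(T)|$, which is what forces a lower bound on the minimum degree --- hence on the edge-connectivity --- and (ii) showing that the reserved family of spanning trees is at once rich enough to absorb an arbitrary remainder $R$ and still leaves a $T$-edge-decomposable leftover; making both of these quantitative is what inflates $k_T$, and it is here that the present paper gains, by showing that the edge-connectivity actually needed is only $50|E(T)|^2$ --- linear, for trees with boundedly many non-leaf vertices --- once a separate minimum-degree threshold $l_T$ is granted. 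Granting that theorem the conjecture is immediate: take $k_T:=\max\{50|E(T)|^2,\,l_T\}$ and note that a $k_T$-edge-connected graph has minimum degree at least $k_T\ge l_T$, so it satisfies the theorem's hypotheses.
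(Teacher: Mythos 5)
Your operative step is the final paragraph, and it is correct and is exactly how the paper itself obtains this statement from its main result: a $k_T$-edge-connected simple graph has minimum degree at least $k_T$, so setting $k_T:=\max\{50|E(T)|^2,\,l_T\}$ makes the minimum-degree hypothesis of Corollary~\ref{cor:simple-graph:T-decomposition} (equivalently, of the abstract's theorem) automatic, and the conjecture follows; beyond that the paper only cites the Bensmail--Harutyunyan--Le--Merker--Thomass\'e proof (Theorem~\ref{intro:thm:proving-conjecture}) rather than reproving it. Your two middle paragraphs sketching an absorption-style argument are too vague to stand as a proof on their own, but your conclusion does not actually depend on them, so the proposal as a whole is sound and follows essentially the same route as the paper.
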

To prove this conjecture, the first attempts were made by Thomassen (2008)~\cite{Thomassen-2008-P3, Thomassen-2008-P2n} who proved it for the $3$-path and the $4$-path. Later, 
Bar{\'a}t and Gerbner (2012)~\cite{Barat-Gerbner-2014} proved it for a small bistar of size $4$, and 
Thomassen (2012, 2013)~\cite{Thomassen-2012, Thomassen-2013-path, Thomassen:bistars:2013} proved it for every path whose length is a power of two, all stars, and a certain family of bistars. 
After several years, Botler, Mota, Oshiro, and Wakabayashi (2016, 2017)~\cite{Botler-Mota-Oshiro-Wakabayashi-2016, Botler-Mota-Oshiro-Wakabayashi-2017} confirmed the conjecture for 
all paths and Merker (2017)~\cite{Merker-2017} confirmed it for trees with diameter at most~$4$. 

Finally, Bensmail, Harutyunyan, Le, Merker, and Thomass\'e (2017) \cite{Bensmail-Harutyunyan-Le-Merker-Thomasse-2017} proved Conjecture~\ref{intro:conj:Barat-Thomassen} completely by giving the following factorial upper bound 
on $k_T$ for trees $T$ of size $m$. %
The parameter $f(m,\lambda)$ was already defined in \cite{Merker-2017} which is at least $(\lambda +1) m^m $ (by reviewing poofs).
\begin{thm}{\rm (\cite{Bensmail-Harutyunyan-Le-Merker-Thomasse-2017})}\label{intro:thm:proving-conjecture}
{Every $(8m^{2m+3}+8f (m, m10^{50 m}) +12m)$-edge-connected simple graph of size divisible by $m$ 
 admits an edge-decomposition into isomorphic copies of any given fixed tree of size $m$.
}\end{thm}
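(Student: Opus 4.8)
The plan is to prove the statement by induction on $|E(T)|=m$, reducing to the already-settled case where $T$ is a path and using the hypothesis of high edge-connectivity as a resource that is consumed by only a polynomial factor at each inductive step. To begin, by the Nash--Williams--Tutte theorem an $m^{5m}$-edge-connected simple graph is roughly $\frac{1}{2}m^{5m}$-tree-connected, and in fact $(M,\ell)$-partition-connected for very large $M$ and $\ell$; throughout I would work with this cleaner structural hypothesis rather than raw edge-connectivity. The base case, in which $T$ is the path with $m$ edges, is the path-decomposition theorem established in the works cited above (e.g.\ \cite{Botler-Mota-Oshiro-Wakabayashi-2017, Merker-2017}), so the real content is the inductive step.

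For the inductive step, pick a leaf $v$ of $T$ with neighbour $u$ and set $T'=T-v$, a tree with $m-1$ edges; a copy of $T$ is exactly a copy of $T'$ with one extra pendant edge at the image of $u$. To decompose $G$ (with $N:=|E(G)|/m$ the target number of copies) I would: (i) locate inside $G$ a sparse subgraph $F$ with exactly $N$ edges --- say a union of small stars, or even a matching --- realizing a prescribed, evenly spread degree pattern, chosen so that $G_0:=G-F$ is still enormously tree-connected; (ii) apply the inductive hypothesis to $G_0$, which has $N(m-1)$ edges, to obtain a $T'$-decomposition; (iii) attach to each copy of $T'$ a private edge of $F$ at its $u$-image, completing a copy of $T$. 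For (ii) and (iii) to be compatible one must prove the statement in a \emph{strengthened} form, asserting that in a sufficiently tree-connected graph one can decompose into copies of $T'$ with the distinguished vertices (the $u$-images) steered onto any prescribed near-regular target set; locating $F$ in (i) is then a degree-prescribed-subgraph problem solved by an orientation/flow argument using the partition-connectivity, and a batch of reserved edge-disjoint spanning trees of $G$ is used both to re-route copies that land in the wrong place and, at the very end, to mop up the $O(\mathrm{poly}(m))$ leftover edges into a few final copies of $T$.

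The bookkeeping then runs as follows: each leaf-removal multiplies the connectivity needed to push the argument through by some fixed $\mathrm{poly}(m)$ factor (the cost of extracting $F$, of the steering in the strengthened hypothesis, and of the reservoir for the endgame), and there are at most $m$ such steps, so iterating from the path base case yields a bound of the form $\mathrm{poly}(m)^{m}=m^{O(m)}$, consistent with the stated $m^{5m}$. I expect the main obstacle to be precisely the strengthened inductive hypothesis: one must show not only that highly tree-connected graphs are $T'$-decomposable but that the distinguished vertices can be forced onto an arbitrary balanced set of locations, and that this extra demand costs only a polynomial --- not exponential --- factor in connectivity per level. Getting the bookkeeping to close (rather than blowing up super-exponentially) is where the careful design of the absorption/re-routing step, and of which subgraphs to reserve at the outset, really matters.
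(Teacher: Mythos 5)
This statement is quoted from \cite{Bensmail-Harutyunyan-Le-Merker-Thomasse-2017}; the paper gives no proof of it, so the only meaningful comparison is with the strategy that the paper visibly imports from that reference, namely Theorem~\ref{intro:thm:equitable:decomposition} together with Merker's factorization theorem. That strategy is quite different from yours: it reduces to bipartite graphs, then decomposes $G$ into factors $G_0,\ldots,G_b$ indexed by the non-leaf vertices of one partite set of $T$, with each vertex of $X$ receiving exactly the fraction $m_i/m$ of its degree in $G_i$ and divisibility conditions on $Y$; the copies of $T$ are then assembled from these factors all at once, with every non-leaf vertex of $T$ handled simultaneously. There is no induction on $|E(T)|$ and no leaf-removal step anywhere in that argument.

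Your proposal has a genuine gap, and you have in fact named it yourself: the ``strengthened inductive hypothesis'' that a highly connected graph can be decomposed into copies of $T'$ with the $u$-images steered onto an arbitrary prescribed near-regular target set is not proved, and it is not a technicality --- it is essentially the whole difficulty of the Bar\'at--Thomassen conjecture, and arguably a statement stronger than the theorem itself. Concretely, step (i) requires you to extract $F$ with a degree pattern matching the distribution of $u$-images in a $T'$-decomposition of $G_0$ that does not yet exist, while step (ii) requires the $T'$-decomposition to conform to the already-extracted $F$; this circularity is exactly the placement problem that blocked naive inductive attacks on the conjecture for a decade. (Note also that $F$ must have $|E(G)|/m$ edges, which typically far exceeds $|V(G)|$, so it cannot be a matching, and the proposed ``re-routing via reserved spanning trees'' is a global operation on the decomposition for which no mechanism is given.) Secondary issues: leaf-removal induction terminates at a single edge, not at a path, so the path base case does not play the role you assign it; and the claim that each level costs only a $\mathrm{poly}(m)$ factor in connectivity is asserted rather than derived. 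As it stands the proposal is a plausible-sounding programme whose central lemma is missing.
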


For stars, Thomassen (2012) proved 
a quartic upper bound on $k_T$, and next Lov{\'a}sz, Thomassen, Wu, and Zhang (2013)~\cite{Lovasz-Thomassen-Wu-Zhang-2013}
refined Thomassen's result by replacing the upper bound by a linear bound.
\begin{thm}{\rm (\cite{Lovasz-Thomassen-Wu-Zhang-2013, Modulo-Orientations-Bounded})}\label{thm:intro:star}
{Every $(3m-3)$-edge-connected simple graph of size divisible by $m$ has an $m$-star-edge-decomposition.
}\end{thm}

For paths, Botler, Mota, Oshiro, and Wakabayashi (2017)~\cite{Botler-Mota-Oshiro-Wakabayashi-2017} presented 
a quartic upper bound on $k_T$ in bipartite graphs. 
Next, Bensmail, Harutyunyan, Le, and Thomass\'e (2018) \cite{Bensmail-Harutyunyan-Le-Thomasse-2019}
showed that dependence on edge-connectivity is not inherently necessary by reducing the upper bound to the constant number $24$ in graphs with sufficiently large minimum degrees. 
Finally, ~Klimo\v{s}ov\'{a} and Thomass\'e (2019)~\cite{Klimova-Thomasse-P3} improved their result to the following sharp version.
\begin{thm}{\rm (\cite{Klimova-Thomasse-P3})}
{For every path $P$, there exists a positive integer $l_P$ such that every
 $3$-edge-connected simple graph of size divisible by $|E(P)|$ with minimum degree at least $l_P$ has a $P$-edge-decomposition.
}\end{thm}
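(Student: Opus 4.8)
\medskip
\noindent\textbf{Proof strategy (an approach I would try).}

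Write $\ell=|E(P)|$, so $P$ is the path with $\ell$ edges. A $3$-edge-connected graph is connected, so we may take $G$ connected, and we are free to let $l_P$ be as large as we like in terms of $\ell$. The two hypotheses play complementary roles. On the one hand, $3$-edge-connectivity is a genuine, and in general best-possible, requirement: there are $2$-edge-connected graphs of arbitrarily large minimum degree, of edge count divisible by $\ell$, with no $P$-decomposition (for instance cyclic chains of large cliques joined in consecutive pairs of edges, with the clique sizes chosen so that the residues modulo $\ell$ are inconsistent around the cycle), so this hypothesis must be used for \emph{global} rerouting of paths across small cuts. On the other hand, the large minimum degree is a relaxation bought by dropping the edge-connectivity far below the Bar\'at--Thomassen threshold, so it can only supply \emph{local} flexibility. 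Note that one cannot invoke Nash--Williams--Tutte to extract spanning trees here, since $3$-edge-connectivity is far too weak for that; hence the proof must be more hands-on than that of Theorem~\ref{intro:thm:proving-conjecture}, and the path-specific feature I would lean on is that a path may be cut at any internal vertex and its two pieces re-spliced into others, which is what makes the exchange moves below possible.

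\emph{Cover, then absorb.} The plan is the familiar scheme ``cover almost all of $G$ greedily by copies of $P$, then absorb the remainder'', with the absorption step carrying essentially all of the difficulty. Using $\delta(G)\ge l_P$, remove edge-disjoint copies of $P$ so as to form a family $\mathcal{Q}$, aiming to reduce the leftover graph $R:=G-\bigcup\mathcal{Q}$ to a \emph{small} subgraph, say $|E(R)|\le f(\ell)$ for an explicit $f$; note that $|E(R)|=|E(G)|-\ell\,|\mathcal{Q}|\equiv 0\pmod\ell$ automatically. Now enlarge $R$ to $R^+:=R\cup Q_1\cup\dots\cup Q_t$ by adding back a bounded number $t\le g(\ell)$ of copies from $\mathcal{Q}$ chosen so that $R^+$ is $3$-edge-connected; this is the first essential use of $3$-edge-connectivity of $G$, ensuring that the bounded edge set $E(R)$ can be surrounded, inside $G$, by a bounded highly connected ``patch'' assembled from paths of $\mathcal{Q}$. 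Since $|E(R^+)|=|E(R)|+t\ell\equiv 0\pmod\ell$, a $P$-decomposition of $R^+$ together with $\mathcal{Q}\setminus\{Q_1,\dots,Q_t\}$ would be a $P$-decomposition of $G$. Everything thus reduces to a bounded-size statement:
\begin{quote}
$(\star)$\quad every $3$-edge-connected graph $H$ with $|V(H)|\ge \ell+1$, $|E(H)|\equiv 0\pmod\ell$ and $|E(H)|\le f(\ell)+g(\ell)\,\ell$ has a $P$-decomposition.
\end{quote}
For $(\star)$ I would induct on $|E(H)|$ using ``$\ell$-modular vertex splits'': if $V(H)$ has a partition $V_1\cup V_2$ for which the two induced pieces, after the cut edges are distributed between them, are each $3$-edge-connected, each large enough to contain $P$, and each of edge count divisible by $\ell$, then recurse on the two pieces. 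If no such split exists---call $H$ \emph{$\ell$-irreducible}---then $H$ is forced into one of finitely many restricted shapes (bounded size, $3$-edge-connected, no nontrivial $\ell$-modular split), and for these one writes the decomposition down directly, exploiting that a $3$-edge-connected graph abounds in short cycles and in pairs of internally disjoint paths between vertices along which one can trade segments, and splitting on the residue of $\ell$ modulo small integers where forced. This last step is where the constant is pinned to $3$: a $2$-edge-cut is exactly the configuration at which the exchange moves can fail, while any cut of size $\ge 3$, reinforced by the large minimum degree inherited from the covering stage, leaves the slack needed to realize the trades.

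\emph{Where the difficulty lies.} I expect two places to be genuinely hard. First, even bringing the leftover $R$ down to \emph{bounded size} (as opposed to merely bounded maximum degree) is not automatic: a careless choice of which copies of $P$ to remove can leave a large, sparse, poorly connected remainder, and ruling this out should itself require the full strength of $3$-edge-connectivity together with the large minimum degree. Second, and more seriously, is the $\ell$-irreducible case of $(\star)$---verifying that the segment-exchange moves always succeed once the minimum cut has size at least $3$. This is the arithmetic-structural core of the theorem, and the reason the bound $3$, rather than $4$, $5$, or the enormous Bar\'at--Thomassen constant, can be reached. By contrast, the bookkeeping of edge counts modulo $\ell$, the construction of the patch $R^+$, and the choice of $l_P$ should all be routine once the structural heart is in place.
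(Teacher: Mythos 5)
First, note that the paper does not prove this statement at all: it is quoted from Klimo\v{s}ov\'{a} and Thomass\'e \cite{Klimova-Thomasse-P3} purely as background, so there is no in-paper proof to compare against and your argument has to stand on its own. It does not, and the decisive flaw is concrete: the bounded-size statement $(\star)$ to which you reduce everything is false. A path with $\ell$ edges has exactly two endpoints, so a decomposition of a graph $H$ into $|E(H)|/\ell$ such paths provides only $2|E(H)|/\ell$ endpoint slots, while every vertex of odd degree must be an endpoint of at least one path (its degree is a sum of terms in $\{0,1,2\}$, so odd degree forces an odd, hence positive, number of $1$'s). Consequently a $3$-regular graph can decompose into paths of length $\ell$ only if $3n/\ell\ge n$, i.e.\ $\ell\le 3$. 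The cube graph $Q_3$ is $3$-edge-connected, has $8\ge \ell+1$ vertices and $12\equiv 0\pmod 4$ edges, yet admits no decomposition into paths of length $4$; it satisfies every hypothesis of $(\star)$ with $\ell=4$. So $(\star)$ cannot be the target of any absorption scheme: the patch $R^{+}$ must be guaranteed to carry far more than $3$-edge-connectivity and the correct edge count modulo $\ell$ --- in particular enough vertices of large degree to absorb the parity demands of path endpoints --- and arranging that is exactly where the minimum-degree hypothesis must enter. Your sketch discards the minimum degree at precisely the point where it is indispensable.

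Second, even granting a corrected target, the earlier stage is also unsupported: greedy removal of copies of $P$ from a graph of large minimum degree naturally leaves a remainder of bounded \emph{maximum degree}, which can still contain on the order of $|V(G)|$ edges; compressing the leftover to a number of edges bounded purely in terms of $\ell$ is a strong global claim for which you offer no mechanism, and you acknowledge as much. Since the rest of the text consists of declared intentions (``one writes the decomposition down directly'', ``I expect two places to be genuinely hard'') rather than arguments, what you have is a plan whose central reduction is refuted by a $12$-edge example and whose remaining steps are not carried out; it is not a proof of the theorem.
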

%
%

In this paper, we improve Theorem~\ref{intro:thm:proving-conjecture} by reducing the upper bound on the necessary edge-connectivity to a linear bound and reducing the upper bound on the necessary minimum degree to an exponential bound.
In particular, we show that the minimum degree condition can be dropped for graphs $G$ with girth greater than the diameter of $T$.
\begin{thm}\label{intro:thm:quadratic}
{Every $O(m)$-edge-connected simple graph $G$ of size divisible by $m$ with minimum degree at least $2^{O(m)}$ has an edge-decomposition into isomorphic copies of $T$. In particular, the minimum degree condition can be dropped for graphs $G$ with girth greater than the diameter of $T$
}\end{thm}

 This is also an improvement of a result due to Merker (2017)~\cite[Theorem 6.2]{Merker-2017} who provided an implicit factorial upper bound of $8m^{2m+3}+8f (m, 0) +12m$ on the necessary edge-connectivity for graphs $G$ with girth greater than the diameter of $T$.

%

%
\section{Basic tools: partition-connected graphs}
In this section, we state some preliminary results and provide a simple proof for each of them based on the following theorem. This result can be proved by a combination of Frank's Orientation Theorem~\cite[Theorem 1]{Frank-1980} and Edmonds's Branching Theorem~\cite{Edmonds-1973}.
\begin{thm}{\rm (See \cite{Frank-Optimization-2011, West-Wu-2012})}\label{thm:partition-connected}
{A graph $G$ is $(m,l)$-partition-connected if and only if for every partition $P$ of $V(G)$,
$$e_G(P)\ge m(|P|-1)+\sum_{\{v\}\in P}l(v),$$
where $m$ is a nonnegative integer and $l$ is a nonnegative integer-valued function on $V(G)$. 
}\end{thm} 
Let us start with the following theorem which gives a sufficient condition for the existence of partition-connected factors with small degrees on a specified independent set.
\begin{thm}{\rm (\cite{P})}\label{thm:small-degree}
{Let $\varepsilon $ be a real number with $0<\varepsilon \le 1$. Let $G$ be a graph with an independent set $X\subseteq V(G)$.
If $G$ is $(\lceil m/\varepsilon \rceil ,\lceil l/\varepsilon \rceil )$-partition-connected, then it has an $(m,l)$-partition-connected factor $H$ 
such that for each~$v\in X$, $$d_H(v)\le \lceil \varepsilon d_G(v)\rceil,$$
where $m$ is a nonnegative integer and $l$ is a nonnegative integer-valued function on $V(G)$. 
}\end{thm} 
\begin{proof}
{For each $v\in X$, define $l'(v)=\lfloor (1-\varepsilon) d_G(v)\rfloor$, and for each $v\in V(G)\setminus X$, define $l'(v)=0$.
It is enough to show that $G$ is $(m,l+l')$-partition-connected.
This implies that $G$ can be edge-decomposed into an $(m,l)$-partition-connected factor $H$ and a $(0,l')$-partition-connected factor $F$.
Thus for each vertex $v\in X$, $d_H(v)= d_G(v)-d_F(v)\le d_G(v)-l'(v)=\lceil \varepsilon d_G(v)\rceil$.
Let $P$ be a partition of $V(G)$, let $S$ be the set of vertices $v$ with $\{v\}\in P$.
 Since $G$ is $(\lceil m/\varepsilon \rceil ,\lceil l/\varepsilon \rceil )$-partition-connected, by Theorem~\ref{thm:partition-connected}, we must have 
$\varepsilon e_G(P)\ge m (|P|-1)+\sum_{v\in S} l(v)$.
Since $S\cap X$ is an independent set, we must also have $ (1-\varepsilon)e_G(P)\ge \sum_{v\in S\cap X} (1-\varepsilon) d_G(v)$.
Therefore,
$$ e_G(P)\ge m (|P|-1)+\sum_{v\in S}l(v)+\sum_{v\in S\cap X}(1-\varepsilon) d_G(v) \ge m (|P|-1)+\sum_{\{v\}\in P}( l(v)+l'(v)).$$
Hence $G$ is $(m,l+l')$-partition-connected by Theorem~\ref{thm:partition-connected}, and the proof is completed.
}\end{proof}
The following theorem gives a sufficient edge-connectivity condition for a graph to be partition-connected.%
\begin{thm}{\rm (\cite{ClosedTrails})}\label{thm:edge-connected-partition-connected}
{Every $2m$-edge-connected graph $G$ satisfying $d_G(v) \ge 2m+2l(v)$ for each vertex $v$ 
is also $(m,l)$-partition-connected (even by deleting at most $m$ arbitrary edges), where $m$ is a nonnegative integer and $l$ is a nonnegative integer-valued function on $V(G)$. 
}\end{thm}
\begin{proof}
{Let $M$ be a factor of $G$ with size at most $m$ and let $G'=G\setminus E(M)$. Let $P$ be a partition of $V(G)$. Then
$$e_{G'}(P)=\sum_{A\in P}\frac{1}{2} d_{G'}(A) =
\sum_{A\in P, |A|\ge 2}\frac{1}{2} d_G(A)-e_M(P)+\sum_{\{v\}\in P}\frac{1}{2} d_G(v)
\ge m(|P|-1)+\sum_{\{v\}\in P}l(v).$$
Thus by Theorem~\ref{thm:partition-connected}, the graph $G'$ is $(m,l)$-partition-connected, and the proof is completed.
}\end{proof}
The following theorem gives a sufficient partition-connectivity condition for the existence of a bipartite partition-connected factor.
The edge-connected version of this result was already introduced in \cite[Proposition 1]{Thomassen-2008-P2n}.

\begin{thm}{\rm (\cite{B})}\label{thm:bipartite}
{Every $(2m,2l)$-partition-connected graph $G$ has a bipartite $(m,l)$-partition-connected factor $H$, where $m$ is a nonnegative integer and $l$ is a nonnegative integer-valued function on $V(G)$. In addition, for every vertex set $A$, $d_H(A)\ge d_G(A)/2$.
}\end{thm}
\begin{proof}
{Let $G$ be a $(2m,2l)$-partition-connected graph. 
Let $H$ be a bipartite factor of $G$ with maximum $|E(H)|$. We claim that $d_H(A)\ge d_G(A)/2 $ for every vertex set $A$. Suppose, to the contrary, that $d_H(A)< d_G(A)/2 $ for a vertex set $A$.
Define $X_0=(X\setminus A)\cup (Y \cap A)$ and $Y_0=(Y\setminus A)\cup (X \cap A)$, where $(X,Y)$ is the bipartition of $H$. It is not difficult to see that the graph $G[X_0,Y_0]$ is a bipartite factor of $G$ with more edges than $H$ which is a contradiction.
Let $P$ be a partition of $V(H)$. By Theorem~\ref{thm:partition-connected}, we must have 
$e_G(P)\ge 2m(|P|-1)+\sum_{\{v\}\in P}2l(v)$. Thus
$$e_H(P)=\sum_{A\in P}\frac{1}{2} d_H(A)\ge \sum_{A\in P}\frac{1}{4} d_G(A)=\frac{1}{2}e_G(P) \ge m(|P|-1)+\sum_{\{v\}\in P}l(v).$$
Therefore, by Theorem~\ref{thm:partition-connected}, the graph $H$ is $(m,l)$-partition-connected, and the proof is completed.
}\end{proof}
%
%
%
%
%
%
%

\section{Preliminary results: $T$-equitable factorizations with large minimum degrees}
\label{sec:equitable coloring}
%
%
%
%
%
%
%
%
%
%

The following theorem was originally introduced by Merker~(2017) for finding edge-decompositions of graphs into homomorphic copies of a fixed tree. Shortly thereafter, it was used for proving Theorem~\ref{intro:thm:proving-conjecture}.
In this section, we formulate an explicit linear upper bound on $f(m,0)$ provided that the ratio of $m_b/m_0$ is small enough
and all integers $m_1,\ldots, m_{b}$ are equal to $2$, except possibly $m_b$.
In addition, we strengthen it to guarantee the existence of factors $G_i$ having large minimum degrees whenever $G$ itself has large enough minimum degree. This feature plays an important role in the proof of Theorems~\ref{intro:thm:quadratic}.
\begin{thm}{\rm (\cite[Theorem 3.3]{Merker-2017})}\label{thm:Merker:factorization}
{For any two positive integers $m$ and $\lambda$, there exists a positive integer $f(m,\lambda)$ such that the following holds:
If $m, m_0,\ldots, m_b$ are positive integers satisfying $m= \sum_{0\le i\le b }m_i$, then every $f(m,\lambda)$-edge-connected bipartite simple graph $G$ with bipartition $(X,Y)$ in which each vertex in $X$ has degree divisible by $m$ can be edge-decomposed into $\lambda$-edge-connected factors $G_0, \ldots, G_{b}$ satisfying the following properties:
\begin{enumerate}{
\item [$\bullet$] 
For each $v\in X$, $d_{G_i}(v)=\frac{m_i}{m}d_G(v)$, when $i\in\{0,\ldots, b\}$.
\item [$\bullet$] 
For each $v\in Y$, $d_{G_i}(v)$ is divisible by $m_i$, when $i\in\{1,\ldots, b\}$.
}\end{enumerate}
}\end{thm}
\subsection{Almost even factorizations}
In this subsection, we improve the edge-connectivity needed in Theorem~\ref{thm:Merker:factorization} to $O(\frac{m}{m_0})$ provided that all integers $m_1,\ldots, m_{b}$ are equal to the same number $2$. Our proof is based on the following recent result about the existence of even factors with restricted degrees. 
\begin{thm}{\rm (\cite{EquitableFactorizations-Highly})}\label{thm:G0:edge-decomposition:Eulerian}
{Let $G$ be a graph, let $b$ be a positive integer, let $\varepsilon$ be a real number with $0< \varepsilon \le 1$.
If $G$ is odd-$\lceil \frac{1}{\varepsilon} \rceil$-edge-connected, then
$G$ can be edge-decomposed into $G_0,\ldots, G_b$ factors such that for each vertex $v$, 
\begin{enumerate}{
\item [$\bullet$] 
 $ | d_{G_0}(v)- \varepsilon d_G(v) | <2$. 
\item [$\bullet$] 
 $d_{G_i}(v)$ is even and 
$| d_{G_i}(v)-\frac{1-\varepsilon}{b} d_G(v)|<2$, when $i\in\{1\ldots, b\}$.
}\end{enumerate}
}\end{thm}
Now, we are in a position to significantly refine Theorem~\ref{thm:Merker:factorization} for a particular case.
It should be pointed out that a weaker version of the special cases $m_0\in \{1,2\}$ and $l=0$ of this result were already proved in \cite[Lemma 4.3 and Corollary 4.5]{Botler-Mota-Oshiro-Wakabayashi-2017}.
\begin{cor}\label{cor:parity}
{Let $m, m_0, \ldots, m_{b}$ be positive integers satisfying $m=\sum_{0\le i \le b} m_i$ and $m_1=\cdots = m_{b}=2$. Let $G$ be a bipartite graph with bipartition $(X,Y)$ in which each vertex in $X$ has degree divisible by $m$, and let $l$ be a nonnegative integer-valued function on $V(G)$.
If $G$ is $(\lceil \frac{m}{m_0}\rceil, ml )$-partition-connected, 
then it can be edge-decomposed into factors $G_0, \ldots, G_{b}$ such that
\begin{enumerate}{
\item [$\bullet$] 
For each vertex $v$, $d_{G_i}(v)\ge m_il(v)$, when $i\in\{0\ldots, b\}$.
\item [$\bullet$] 
For each $v\in X$, $d_{G_i}(v)=\frac{m_i}{m}d_G(v)$, when $i\in\{0\ldots, b\}$.
\item [$\bullet$] 
For each $v\in Y$, $d_{G_i}(v)$ is even, when $i\in\{1\ldots, b\}$.
}\end{enumerate}
Moreover by replacing the weaker condition $d_{G_0}(v)\ge m_0l(v)-1$, it is suffices that $G$ is odd-$\lceil \frac{m}{m_0}\rceil$-edge-connected and for each vertex $v$, $d_G(v)\ge ml(v)$.
}\end{cor}
\begin{proof}
{By applying Theorem~\ref{thm:G0:edge-decomposition:Eulerian} with $\varepsilon=m_0/m$, the graph $G$ can be edge-decomposed into factors $G_0, \ldots, G_{b}$ such that for each vertex $v$, 
 $ d_{G_0}(v)\ge 
 \lfloor \frac{m_0}{m} d_G(v) \rfloor -1$
and for all $i\in\{1\ldots, b\}$, $d_{G_i}(v)$ is even and 
$ \lfloor \frac{2}{m}d_G(v)\rfloor -1 \le d_{G_i}(v) \le 
 \lceil \frac{2}{m}d_G(v)\rceil +1 $. Note that $\frac{1-\varepsilon}{b}=\frac{2b/m}{b}=\frac{2}{m}$.
Since $d_G(v)\ge \lceil \frac{m}{m_0}\rceil +ml(v)$, we must have
$ d_{G_0}(v)\ge \lfloor \frac{m_0}{m}(\lceil \frac{m}{m_0}\rceil +ml(v) )\rfloor -1\ge m_0l(v)$.
If $x\in X$, then one can conclude that
$d_{G_i}(x)=\frac{2}{m} d_G(x)\ge 2 l(x)=m_il(v)$, because both integers $d_{G_i}(x)$ and $\frac{2}{m}d_G(x)$ have the same parity. 
This implies that
 $d_{G_0}(x)=\frac{m_0}{m} d_G(x)$.
In addition, if $y\in Y$, then $d_{G_i}(y)\ge
\lfloor \frac{2}{m}d_G(y)\rfloor -1\ge
 2l(y)-1$, because $d_G(y) \ge ml(y)$. 
Since both integers $d_{G_i}(y)$ and $2l(y)$ have the same parity, we must have $d_{G_i}(y)\ge 2l(y)$. 
Hence the assertion holds.
}\end{proof}
%
%
%
%
%
%
%
%
%
%
\subsection{Equitable factorizations with large degrees}
In this subsection, we improve the edge-connectivity needed in Theorem~\ref{thm:Merker:factorization} to $O(\frac{m}{m_0}
\sum_{1\le i \le b, m_i\neq 2}m_i)$.
Before doing so, let us recall the following two lemmas from \cite{Modulo-Factors-Bounded, Complementary}. The first one is an improved version of Lemma 4.1 in~\cite{Merker-2017} and the second one is inspired by Lemma 4.3 in \cite{Merker-2017}. (Note that the proof of Lemma 4.3 in \cite{Merker-2017} needs to be repaired in some parts because degrees of vertices of $G$ in $A$ are not necessarily divisible by $m^2$). 
\begin{lem}{\rm (\cite{Modulo-Factors-Bounded, Merker-2017})}\label{lem:modulo:1/2}
{Let $G$ be a bipartite graph with bipartition $(X,Y)$ in which each vertex in $X$ has even degree.
Let $f$ be an integer-valued function on $Y$ satisfying $\sum_{v\in Y}f(v)\stackrel{k}{\equiv}\frac{1}{2}|E(G)|$.
 If $G$ is $(3k-3)$-edge-connected, then
there exists a factor $H$ of $G$ such that 
\begin{enumerate}{
\item [$\bullet$] For each $v\in X$, $d_{H}(v)=\frac{1}{2}d_G(v)$.
\item [$\bullet$] For each $v\in Y$, $d_H(v)\stackrel{k}{\equiv} f(v)$ and $|d_H(v)- \frac{1}{2}d_G(v)|<k$.
}\end{enumerate}
}\end{lem}
\begin{lem}{\rm (\cite{Complementary})}\label{lem:modulo:1/2:F-F0}
{Let $G$ be a bipartite graph with bipartition $(X,Y)$ and let $f$ be an integer-valued function on $V(G)$ satisfying $\sum_{v\in Y}f(v)\stackrel{k}{\equiv}\sum_{v\in X}f(v)$.
Let $F$, $F_0$, and $T$ be three edge-disjoint factors of $G$ such that for each $v\in X$, 
 $$d_F(v)+\frac{1}{2}d_T(v)\le f(v)\le d_G(v)-(d_{F_0}(v)+\frac{1}{2} d_T(v)).$$ 
 If $T$ is $(3k-3)$-edge-connected, then
there exists a factor $H$ of $G$ including $F$ excluding $F_0$ such that 
\begin{enumerate}{
\item [$\bullet$] For each $v\in X$, $d_{H}(v)=f(v)$.
\item [$\bullet$] For each $v\in Y$, $d_H(v)\stackrel{k}{\equiv} f(v)$.
}\end{enumerate}
}\end{lem}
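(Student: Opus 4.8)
The plan is to derive this result from a splitting/orientation-style argument applied to the auxiliary tree-connected factor $T$, combining a parity-correction on $Y$ with the rigid degree prescription on $X$. The key structural observation is that the hypothesis on $f$ says precisely that, after we commit the edges of $F$ and $F_0$, we still have "room" $d_T(v)/2$ on each side of the target $f(v)$ at every $x\in X$; this is exactly the slack needed to absorb an arbitrary parity adjustment coming from $Y$. So the first step is to set up the problem as choosing, for each edge of $T$, an orientation (equivalently, a way of splitting the "free" $T$-edges between $H$ and $H_0$), so that at every $x\in X$ we hit $f(x)$ on the nose and at every $y\in Y$ we hit $f(y) \bmod k$.

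\textbf{Step 1 (reduce to a degree-constrained subgraph problem on $T$).} Write $d^*(v) = f(v) - d_F(v)$ for $v\in X$; by hypothesis $0 \le d^*(v) - (d_G(v) - d_F(v) - d_{F_0}(v) - d_T(v))$ is not quite what we want, so instead observe that the edges of $G$ not in $F\cup F_0$ split as $E(T) \sqcup E(R)$ where $R = G\setminus(E(F)\cup E(F_0)\cup E(T))$. We must choose $H$ with $E(F)\subseteq E(H)$, $E(H)\cap E(F_0)=\emptyset$; so $H$ is determined by a subset $S\subseteq E(T)\cup E(R)$, namely $E(H) = E(F)\cup S$. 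The condition $d_H(x) = f(x)$ becomes $d_S(x) = d^*(x)$ for all $x\in X$, and the hypothesis $d_F(x)+d_T(x)/2 \le f(x) \le d_G(x)-d_{F_0}(x)-d_T(x)/2$ rewrites as $d_T(x)/2 \le d^*(x) \le d_R(x) + d_T(x)/2$, i.e. $d^*(x) - d_T(x)/2 \in [0, d_R(x)]$ and symmetrically $d_R(x) - (d^*(x)-d_T(x)/2) = d_{R}(x)+d_T(x)/2 - d^*(x)\ge 0$. Thus if we first decide how many $T$-edges at $x$ go into $S$ — call it $t(x)\in[0,d_T(x)]$ — then we need $d^*(x) - t(x) \in [0, d_R(x)]$ choosable $R$-edges, which is solvable in the bipartite $R$ by a Gale–Ryser / flow argument as long as the global counts on $X$ and $Y$ are compatible; the parity/mod-$k$ freedom on $Y$ is what makes the global count work out.

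\textbf{Step 2 (the $T$-orientation with modular out-degree control — the crux).} The heart is choosing the "$T$-portion": we want a sub(multi)set $S_T\subseteq E(T)$ with $d_{S_T}(x)$ prescribed at each $x\in X$ to within the allowed window $[\,\max(0,d^*(x)-d_R(x)),\,\min(d_T(x),d^*(x))\,]$ — a window of length at least $d_T(x)/2$, hence generous — and simultaneously $d_{S_T}(y) \equiv (\text{something})\pmod k$ at each $y\in Y$, together with the strict inequality $\tfrac12 d_T(y) - k < \min\{d_{S_T}(y), d_T(y)-d_{S_T}(y)\}$ forcing the split at each $y$ to be near-balanced. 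This is exactly where $(3k-3)$-edge-connectivity of $T$ enters: I would invoke a theorem on orientations of highly edge-connected graphs with prescribed modular out-degrees and near-balanced out-degrees — this is the standard Lovász-type "$(3k-3)$-edge-connected $\Rightarrow$ every degree-constrained-mod-$k$ orientation with balance exists" phenomenon. Concretely, think of each $x\in X$ as demanding out-degree in a specified residue-and-range, each $y\in Y$ demanding out-degree $\equiv r(y)\pmod k$ and within $\tfrac12 d_T(y) \pm (k-1)$; pick the residues $r(y)$ so the handshake sum is consistent modulo $k$ (possible because $\sum_{Y}f \equiv \sum_X f \pmod k$ and the $X$-side residues are pinned by $f$). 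Then high edge-connectivity guarantees such an orientation exists, and reading "edge oriented towards $X$" as "edge in $S_T$" gives the desired $S_T$.

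\textbf{Step 3 (assemble $H$ and verify).} Combine $S_T$ with the $R$-selection from Step 1 (run the bipartite flow with the now-fixed $t(x)=d_{S_T}(x)$, and with the $Y$-side free), set $E(H) = E(F)\cup S_T\cup S_R$, and check each bullet: $d_H(x) = d_F(x)+t(x)+ (d^*(x)-t(x)) = f(x)$; $d_H(y) \equiv f(y)\pmod k$ by construction of the residues; and $d_{H_0}(y)-d_{F_0}(y) = d_{S_T^c}(y) + d_{S_R^c}(y) \ge d_{S_T^c}(y) > \tfrac12 d_T(y) - k$, with the symmetric bound for $d_H(y)-d_F(y)$, exactly the near-balance from Step 2. \textbf{The main obstacle} I anticipate is Step 2: making the modular-orientation theorem deliver \emph{both} the exact (non-modular) out-degree at each $x\in X$ \emph{and} the mod-$k$-with-balance out-degree at each $y\in Y$ within one orientation — the cleanest route is probably to contract each $X$-vertex's demand into a suitable gadget (e.g. split $x$ into parallel classes, or attach pendant structure) so that the whole thing becomes a uniform "prescribed modular out-degree, balanced" instance on an auxiliary graph that inherits $(3k-3)$-edge-connectivity from $T$, then cite the known orientation result (this is the role the cited paper~\cite{Complementary} presumably plays). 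Getting the edge-connectivity bookkeeping through the gadget without losing the factor $3k-3$, and checking the window length $d_T(x)/2$ really does dominate the error terms $k-1$ produced on the $Y$-side, are the places where care is needed.
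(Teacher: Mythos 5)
First, note that the paper does not prove this lemma at all: it is quoted as a black box from \cite{Complementary}, so there is no internal proof to compare yours against. Your skeleton --- peel off $F$ and $F_0$, split the remaining edges into the connected factor $T$ and a free remainder $R$, solve an exact-degree selection problem on $R$ and a balanced modulo-orientation problem on $T$ --- is the right general shape, and it is how statements of this type are proved in the literature this paper builds on (Merker's Lemma~4.1 and its refinements).

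However, there are genuine gaps. (1) Your claim in Step~2 that the admissible window for $d_{S_T}(x)$ has length at least $d_T(x)/2$ is false: when $d_R(x)=0$ and $f(x)=d_F(x)+d_T(x)/2$, the window $[\max(0,d^*(x)-d_R(x)),\min(d_T(x),d^*(x))]$ collapses to the single point $d_T(x)/2$. So you cannot get away with a near-balanced split at $X$; you need the $T$-part of $H$ to contain \emph{exactly} $\lceil d_T(x)/2\rceil$ edges at each $x\in X$ (this target is always in the window, but it is an exact, not approximate, demand). (2) Your ordering in Step~3 breaks the mod-$k$ condition at $Y$: you fix $S_T$ (hence the residues $r(y)$) first and then choose $S_R$ with the $Y$-side free, but $d_H(y)=d_F(y)+d_{S_T}(y)+d_{S_R}(y)$, so the uncontrolled $d_{S_R}(y)$ destroys the congruence. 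This is fixable by choosing $S_R$ first and only then setting $r(y)\equiv f(y)-d_F(y)-d_{S_R}(y)\pmod k$ (handshake consistency still follows from $\sum_{v\in Y}f(v)\stackrel{k}{\equiv}\sum_{v\in X}f(v)$), but as written the argument fails. (3) Most seriously, the crux --- a split of a $(3k-3)$-edge-connected bipartite graph with exact half-degrees on the $X$-side and prescribed residues mod $k$ together with $|d_{S_T}(y)-d_T(y)/2|<k$ on the $Y$-side --- is exactly the lemma in the special case $F=F_0=\emptyset$ and $R=\emptyset$, so deferring it to ``the known orientation result'' is circular. The Lov\'asz--Thomassen--Wu--Zhang theorem gives only modular control, and its bounded-out-degree refinements give near-balance at every vertex but not exact values on one side; the gadget/splitting construction you gesture at, and the verification that it preserves $(3k-3)$-edge-connectivity, is precisely the missing content.
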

\begin{proof}
{We repeat the proof in \cite{Complementary} to be self-contained. For convenience, let us define $G_0=G\setminus E(F_0)$. By the assumption, if for a vertex $v\in X$, $d_T(v)$ is odd, then
$d_F(v)+\frac{1}{2}d_T(v)<d_{G_0}(v)-\frac{1}{2} d_T(v)$ and so
 $d_T(v)+d_F(v)\neq d_{G_0}(v)$.
Thus there is a factor $T'$ of $G_0\setminus E(F)$ including $T$ such that for each $v\in X$, $d_{T'}(v)=d_T(v)+1$ when $d_T(v)$ is odd, and $d_{T'}(v)=d_T(v)$ when $d_T(v)$ is even.
By the assumption, for each $v\in X$, we have $d_F(v)\le f(v)-d_{T'}(v)/2 \le d_{G_0}(v)-d_{T'}(v)$. 
Therefore, there exists a factor $F'$ of $G_0\setminus E(T')$ including $F$ such that for each $v\in X$,
$d_{F'}(v)= f(v)-d_{T'}(v)/2$.
 For each $v\in Y$, define $f'(v)=f(v)-d_{F'}(v)$.
According to the assumption, we must have 
$\sum_{v\in Y}f'(v)= \sum_{v\in Y}(f(v)-d_{F'}(v))\stackrel{k}{\equiv}\sum_{v\in X}(f(v)-d_{F'}(v))=\sum_{v\in X}\frac{1}{2}d_{T'}(v)=\frac{1}{2}|E(T')|$. 
Since $T'$ is $(3k-3)$-edge-connected, by Lemma~\ref{lem:modulo:1/2}, there exists a factor $H'$ of $T'$ such that 
for each $v\in X$, $d_{H'}(v)=\frac{1}{2}d_{T'}(v)$, and 
 for each $v\in Y$, $d_{H'}(v)\stackrel{k}{\equiv} f'(v)$ and $|d_{H'}(v)- \frac{1}{2}d_{T'}(v)|<k$.
It is not hard to see that $ H' \cup F'$ is the desired factor that we are looking for.
}\end{proof}
The following theorem develops Corollary 6.10 in \cite{Complementary} to a partition-connected version.
\begin{thm}\label{thm:modulo:b=2}
{Let $m$, $m_0$, and $m_1$ be three positive integers satisfying $m = m_0 +m_1$. Let $G$ be a bipartite graph with bipartition $(X,Y)$ in which each vertex in $X$ has degree divisible by $m$. For $i\in \{0,1\}$, let $\lambda_i$ be a nonnegative integer and let $l_i$ be a nonnegative integer-valued function on $V(G)$.
If $G$ is $(\lfloor \frac{3mm_1}{2\min\{m_0, m_1\}}\rfloor +\lceil \frac{m\lambda_0}{m_0}\rceil+\lceil\frac{m\lambda_1}{m_1}\rceil, 
\lceil \frac{m}{m_0}l_0\rceil+\lceil\frac{m}{m_1}l_1\rceil )$-partition-connected, then
$G$ can be edge-decomposed into two factors $G_0$ and $G_1$ such that 
\begin{enumerate}{
\item [$\bullet$] $G_i$ is $(\lambda_i, l_i)$-partition-connected, when $i\in \{0,1\}$.
\item [$\bullet$] For each $v\in X$, $d_{G_i}(v)=\frac{m_i}{m}d_G(v)$, when $i\in \{0,1\}$.
\item [$\bullet$] For each $v\in Y$, $d_{G_1}(v)$ is divisible by $m_1$,
}\end{enumerate}
}\end{thm}
\begin{proof}
{For each $v\in X$, define $f(v)=\frac{m_1}{m}d_G(v)$, and
for each $v\in Y$, define $f(v)=0$.
Obviously, $\sum_{v\in Y}f(v)\stackrel{m_1}{\equiv}0\stackrel{m_1}{\equiv} \sum_{v\in X}f(v)$.
By the assumption, we can decompose $G$ into three factors $H_0$, $H_1$, and $H$ such that 
$H_i$ is $(\lceil \frac{m}{m_i}\lambda_i\rceil , \lceil \frac{m}{m_i}l_i\rceil )$-partition-connected
and
$H$ is $\lceil \frac{m(3m_1-1)}{2\min\{m_0, m_1\}} \rceil $-tree-connected.
According to Theorem~\ref{thm:small-degree}, 
 the graph $H_i$ has a $(\lambda_i, l_i)$-partition-connected factor $F_i$
 such that for each $v\in X$, $d_{F_i}(v)\le \lceil \frac{m_i}{m} d_{H_i}(v)\rceil$.
In addition, the graph $H$ has a $(3m_1-1)$-tree-connected factor $T'$
 such that for each $v\in X$, $d_{T'}(v)\le \lceil \frac{2 \min \{m_{0}, m_1\}}{m} d_{H}(v)\rceil$.
Let $T$ be a $(3m_1-3)$-tree-connected factor of $T'$ such that for each vertex $v$, $d_T(v)\le d_{T'}(v)-2$.
Therefore, for each $v\in X$, 
$d_{F_1}(v) +\frac{1}{2}d_{T}(v)\le 
\lceil \frac{ m_{1}}{m} d_{H_1}(v)\rceil +\lfloor \frac{m_1}{m} d_{H}(v)\rfloor
\le \lceil\frac{m_1}{m} d_G(v)\rceil= f(v)$.
Similarly, for each $v\in X$, 
$d_{F_0}(v) +\frac{1}{2}d_{T}(v)\le 
\lceil \frac{ m_{0}}{m} d_{H_0}(v)\rceil +\lfloor \frac{m_0}{m} d_{H}(v)\rfloor \le \lceil\frac{m_0}{m} d_G(v)\rceil=d_G(v)-f(v)$.
By Lemma~\ref{lem:modulo:1/2:F-F0}, the graph $G$ has a factor $G_1$ including $F_1$ excluding $F_0$ such that 
for each $v\in X$, $d_{G_1}(v)=f(v)=\frac{m_1}{m}d_G(v)$, and for each $v\in Y$, $d_{G_1}(v)\stackrel{m_1}{\equiv} f(v)=0$.
It is enough to set $G_0=G\setminus E(G_1)$ to complete the proof.
}\end{proof}
Now, we are in a position to significantly refine Theorem~\ref{thm:Merker:factorization} for another particular case.
In fact, this version allows to us to select $m_b$ in Corollary~\ref{cor:parity} arbitrarily but by increasing the required partition-connectivity.

\begin{cor}\label{cor:parity:factorization}
{Let $m, m_0, m_1,\ldots, m_{b}$ be positive integers satisfying $m=\sum_{0\le i \le b} m_i$ and $m_1=\cdots = m_{b-1}=2$. Let $G$ be a bipartite graph with bipartition $(X,Y)$ in which each vertex in $X$ has degree divisible by $m$, and
let $l$ be a nonnegative integer-valued function on $V(G)$.
If $G$ is 
 $(\lceil \frac{3mm_b}{2\min \{m-m_b,m_b\}}+ \frac{2m}{m_0}\rceil, 2ml)$-partition-connected, then
$G$ can be edge-decomposed into factors $G_0,\ldots, G_{b}$ such that 
\begin{enumerate}{
\item [$\bullet$] For each vertex $v$, $d_{G_i}(v)\ge m_il(v)$, when $i\in \{0,\ldots, b\}$.
\item [$\bullet$] For each $v\in X$, $d_{G_{i}}(v)=\frac{m_i}{m}d_G(v)$, when $i\in \{0,\ldots, b\}$.
\item [$\bullet$] For each $v\in Y$, $d_{G_{i}}(v)$ is divisible by $m_i$,
when $i\in \{1,\ldots, b\}$.
}\end{enumerate}
In addition, by ignoring the condition $d_{G_b}(v)\ge m_bl(v)$,
 it suffices that $G$ is $(\lceil \frac{3mm_b}{2\min \{m-m_b,m_b\}}+ \frac{2m}{m_0}\rceil, ml)$-partition-connected.
}\end{cor}
\begin{proof}
{By the assumption, the graph $G$ is $(\lfloor \frac{3mm_b}{2\min \{m'_0,m_b\}}\rfloor+ 
\lceil \frac{m}{m'_0} \lambda'\rceil, 
 \frac{m}{m'_0}l'+\frac{m}{m_b}l_b )$-partition-connected, 
where $m'_0=m-m_b$, $l'=m'_0 l$, and $l_b=m_bl$, $\lambda'=\lceil \frac{m'_0}{m_0}\rceil $. 
Note that $\lceil \frac{m}{m'_0} \lambda'\rceil\le \frac{m}{m_0}+\frac{m}{m'_0}\le \frac{2m}{m_0}$.
According to Theorem~\ref{thm:modulo:b=2}, the graph $G$ contains a $(0,l_b)$-partition-connected factor $G_b$
 such that its complement $G'$ is
$(\lambda',l')$-partition-connected,
for each $v\in X$, $d_{G'}(v)=\frac{m'_0}{m}d_G(v)$ and $d_{G_{b}}(v)=\frac{m_b}{m}d_G(v)$, and
for each $v\in Y$, $d_{G_{b}}(v)$ is divisible by $m_b$.
By applying Corollary~\ref{cor:parity} with setting $m'_0$ instead of $m$, the graph $G'$ can be edge-decomposed into factors $G_0,\ldots, G_{b-1}$ such that
 for each vertex $v$, $d_{G_i}(v)\ge m_i l(v)$,
for each $v\in X$, $d_{G_{i}}(v)=\frac{m_i}{m'_0}d_{G'}(v)=\frac{m_i}{m}d_G(v)$, when $i\in \{0,\ldots, b-1\}$,
and for each $v\in Y$, $d_{G_i}(v)$ is even when $i\in\{1,\ldots, b-1\}$. For proving the last statement, it is enough to set $l_b=0$.
Hence the assertion holds.
}\end{proof}
\begin{lem}{\rm (\cite{Werra-1971})}\label{lem:Werra}
{Every bipartite graph $G$ can be edge-decomposed into $m$ factors $F_1,\ldots, F_m$
 such that for each $v\in V(F_i)$ with $1\le i\le m$, 
 $$|d_{F_i}(v)-d_{G}(v)/m|<1.$$
Consequently, if $G$ is a modulo $m$-regular bipartite graph, then it can be decomposed into factors $G_1,\ldots, G_b$ such that for each $v\in V(G_i)$, $d_{G_i}(v)=\frac{m_i}{m}d_{G}(v)$, where $m_1,\ldots, m_{b}$ are positive integers satisfying $m=\sum_{1\le i \le b} m_i$.
}\end{lem}
\begin{proof}
{To prove the consequence, it is enough to set $G_i$ to be
the union of $m_i$ edge-disjoint factors $F_j$ so that all of $G_i$ are edge-disjoint.
Note that for each vertex $v$, $d_{F_j}(v)=\frac{1}{m}d_{G}(v)$ and hence $d_{G_i}(v)=\frac{m_i}{m}d_{G}(v)$.
}\end{proof}
The following corollary enables us to select $m_i$ in Corollary~\ref{cor:parity} arbitrarily but by increasing the required partition-connectivity.
\begin{cor}
{Let $m, m_0, m_1,\ldots, m_{b}$ be positive integers satisfying $m=\sum_{0\le i \le b} m_i$ and let 
$m'_b=\sum_{1\le i\le b, m_i\neq 2}m_i\ge 2$. 
Let $G$ be a bipartite graph with bipartition $(X,Y)$ in which each vertex in $X$ has degree divisible by $m$, and
let $l$ be a nonnegative integer-valued function on $V(G)$.
If $G$ is 
 $(\lceil \frac{3mm'_b}{2\min \{m-m'_b,m'_b\}}+ \frac{2m}{m_0}\rceil, 2ml)$-partition-connected, then
$G$ can be edge-decomposed into factors $G_0,\ldots, G_{b}$ such that 
\begin{enumerate}{
\item [$\bullet$] For each vertex $v$, $d_{G_i}(v)\ge m_il(v)$, when $i\in \{0,\ldots, b\}$.
\item [$\bullet$] For each $v\in X$, $d_{G_{i}}(v)=\frac{m_i}{m}d_G(v)$, when $i\in \{0,\ldots, b\}$.
\item [$\bullet$] For each $v\in Y$, $d_{G_{i}}(v)$ is divisible by $m_i$,
when $i\in \{1,\ldots, b\}$.
}\end{enumerate}
}\end{cor}
\begin{proof}
{We may assume that $m_i=2$ for all $i$ with $1\le i\le p$, and $m'_b=\sum_{p< i\le k}m_i$.
By Corollary~\ref{cor:parity:factorization}, the graph $G$ can be edge-decomposed into $p+2$ factors $G_0,\ldots, G_{p}$ and $H$
such that for each vertex $v$, $d_{G_i}(v)\ge m_il(v)$ when $i\in \{0,\ldots, p\}$,
for each $v\in X$, $d_{G_{i}}(v)=\frac{m_i}{m}d_G(v)$ when $i\in \{0,\ldots, p\}$, and
for each $v\in Y$, $d_{G_{i}}(v)$ is even when $i\in \{1,\ldots, p\}$.
In addition, for each vertex $v$, $d_{H}(v)\ge m'_{b}l(v)$,
for each $v\in X$, $d_{H}(v)=\frac{m'_{b}}{m}d_G(v)$, and
for each $v\in Y$, $d_{H}(v)$ is divisible by $m'_{b}$.
By Lemma~\ref{lem:Werra}, the graph $H$ can be edge-decomposed into factors $G_{p+1},\ldots, G_{b}$
such that for each vertex $v$, $d_{G_i}(v)=\frac{m_i}{m'_{b}}d_H(v)$, where $i\in \{p+1,\ldots, b\}$. Thus 
for each vertex $v$, $d_{G_i}(v) \ge m_il(v)$, 
for each $v\in X$, $d_{G_{i}}(v)=\frac{m_i}{m}d_G(v)$, and
for each $v\in Y$, $d_{G_{i}}(v)$ is divisible by $m_{i}$. Hence the proof is completed.
}\end{proof}
\begin{remark}
{Note that we can impose the quadratic bound of $m^2(3 + 2\lambda)$ on $f(m, \lambda)$ in Theorem~\ref{thm:Merker:factorization}.
To keep simplicity of this paper, we decided to delete its proof here.
}\end{remark}
%
%
%
%
%
%
%
%
%
\section{Bipartite graphs: $T$-edge-decompositions}
\label{sec:diameter}
%
%
\subsection{Preliminary results: compatible $T$-edge-decomposition}
The following assertion gives sufficient conditions for the existence of compatible $T$-edge-decompositions in modulo semi-regular bipartite simple graphs which was implicitly established in \cite{Merker-2017, Bensmail-Harutyunyan-Le-Merker-Thomasse-2017}.
We will employ this result together with Corollary~\ref{cor:parity:factorization} to prove 
Theorem~\ref{thm:base:bipartite:decomposition}.
\begin{thm}{\rm (\cite{Bensmail-Harutyunyan-Le-Merker-Thomasse-2017, Merker-2017})}\label{preliminary:thm:equitable:decomposition}
{Let $T$ be a tree of size $m$ and let $m_0, \ldots, m_b$ be positive integers satisfying $m=\sum_{0\le i\le b}m_i$.
Assume that there is a partition $B_0,\ldots, B_b$ of one partite set $B$ of $T$ such that $B_0$ consists of all leaf vertices in $B$ and
 $m_i=\sum_{v\in B_i}d_T(v)$.
Let $G$ be a bipartite graph with bipartition $(X,Y)$ in which each vertex in $X$ has degree divisible by $m$.
Then $G$ admits a $(B, Y)$-compatible $T^*$-edge-decomposition $\mathcal{A}^*$, if it can be edge-decomposed into factors $G_0, \ldots, G_{b}$ satisfying the following properties:
\begin{enumerate}{
\item [$\bullet$] 
For each $v\in X$, $d_{G_i}(v)=\frac{m_i}{m}d_G(v)$, when $i\in\{0,\ldots, b\}$.
\item [$\bullet$] 
For each $v\in Y$, $d_{G_i}(v)$ is divisible by $m_i$, when $i\in\{1,\ldots, b\}$.
}\end{enumerate}
In addition, for any pair $v\in X$ and $t\in A$, $|\mathcal{A}^*(v,t)|= d_{G}(v)/m$,
and for any pair $v\in Y$ and $t\in B_i$, $|\mathcal{A}^*(v,t)|- d_{G_i}(v)/m_i | < 1$.
In particular, $G$ admits such a $(B, Y)$-compatible $T$-edge-decomposition, if $G$ is simple and one of the following conditions holds:
\begin{enumerate}{

\item [$\bullet$] 
$girth(G)>diam(T)$.

\item [$\bullet$] 
$\delta(G_i)\ge m_i 10^{50m}$, when $i\in\{0,\ldots, b\}$.
}\end{enumerate}
}\end{thm}
\begin{proof}
{By Lemma~\ref{lem:Werra}, the graph $G_i$ has a factorization $F_{i, 1},\ldots, F_{i, m_i}$ such that for any $v\in X$ and $j\in \{1,\ldots, m_i\}$, $d_{F_{i, j}}(v)=\frac{1}{m_i}d_{G_i}(v)=\frac{1}{m}d_G(v)$, 
and for any $v\in Y$ and 
$j\in \{1,\ldots, m_i\}$, $\lfloor \frac{1}{m_i}d_{G_i}(v) \rfloor \le d_{F_{i, j}}(v) \le\lceil \frac{1}{m_i}d_{G_i}(v) \rceil$.
Consequently, $d_{F_{i, j}}(v)=\frac{1}{m_i}d_{G_i}(v)$ when $i\neq 0$.
Therefore, all factors $F_{i, j}$ form a $(B, Y)$-equitable factorization $F_1,\ldots, F_m$ for $G$ (corresponding to an edge-coloring of $T$ with colors $\{1,\ldots, m\}$).
Thus by Lemma 3.2 in \cite{Merker-2017}, the graph $G$ admits a $(B, Y)$-compatible $T^*$-edge-decomposition satisfying the theorem.
Note that if $girth(G)>diam(T)$, then this $T^*$-edge-decomposition must automatically be a $(B, Y)$-compatible $T$-edge-decomposition. 
In addition, if the minimum degree of every $G_i$ is at least $m_i10^{50m}$, then the minimum degree of every $F_j$ must be at least $10^{50m}$. Thus by Theorem~7 in \cite{Bensmail-Harutyunyan-Le-Merker-Thomasse-2017}, the graph $G$ admits a $T$-edge-decomposition satisfying the theorem.
}\end{proof}
%

%
%
%
%
%
\subsection{Bipartite graphs with degree divisible by $m$ in one side}
The following theorem shows an application of Corollary~\ref{cor:parity:factorization} and Theorem~\ref{preliminary:thm:equitable:decomposition}.
\begin{thm}\label{thm:bipartite:semiregular}
{Let $T$ be a tree of size $m$ with bipartition $(A,B)$.
Let $G$ be a bipartite simple graph with bipartition $(X,Y)$ in which vertices in $X$ have degrees divisible by $m$. 
Then $G$ admits a $(B,Y)$-compatible $T$-edge-decomposition, if at least one of the following properties holds:
\begin{enumerate}{
\item [$\bullet$] 
 $ girth(G) >diam(T)$ and $G$ is $\lfloor \frac{9}{2}m\rfloor $-tree-connected.

\item [$\bullet$] 
$diam(T)= 3$ and $G$ is $\lceil \frac{3}{2}m\rceil $-tree-connected.

\item [$\bullet$] 
$G$ is $(\lfloor \frac{9}{2}m\rfloor, 2m10^{50m})$-partition-connected.
}\end{enumerate}
Moreover, if $(\lfloor \frac{9}{2}m\rfloor, 4ml)$-partition-connected, then $G$ admits a $(B, Y)$-compatible $T^*$-edge-decomposition such that for any compatible pair $v\in V(G)$ and $t\in V(T)$, there are $l(v)$ homomorphic copies of $T$ including $v$ with the preimage $t$, where $l$ is a nonnegative integer-valued function on $V(G)$.

}\end{thm}
\begin{proof}
{Assume that $diam(T) =3$ and let $m_1$ the degree of an internal vertex $v$ of $T$ and let $m_0=m-m_1$.
We select $v$ such that $m_1\le (m+1)/2$. This yields that $m_1=\min\{m_0,m_1\}$ or $m_0=m_1-1=(m-1)/2$.
Therefore, the graph $G$ must be $(\lceil \frac{3m(m_1 -1)}{2\min\{m_0,m_1\}}\rceil )$-tree-connected. 
By Theorem~\ref{thm:small-degree}, the graph $G$ has a $(3m_1-3)$-tree-connected factor $H$ 
such that for each $v\in X$, $d_{H}(v)\le 
\frac{2\min\{m_0,m_1\}}{m}d_{G}(v) $. 
Note that $0 < \frac{2\min\{m_0,m_1\}}{m} < 1$.
By Lemma~\ref{lem:modulo:1/2:F-F0}, the graph $G$ has an edge-decomposition into two factors 
$G_{0}$ and $G_{1}$ such that for each $v\in X$, $d_{G_{i}}(v)=\frac{m_i}{m}d_G(v)$ and for each $v\in 
Y$, $d_{G_{1}}(v)$ is divisible by $m_1$. 
Since $girth(G)\ge 4 > diam(T)$, by Theorem~\ref{preliminary:thm:equitable:decomposition}, the graph $G$ admits a $(B,Y)$-compatible $T$-edge-decomposition.

Now, assume that $G$ is $(\lfloor \frac{9}{2}m\rfloor, 2ml)$-partition-connected.
We may assume $A \cap I(T)$ and $B \cap I(T)$ are not empty.
Otherwise, $T$ must be a star and so the assertion follows from Theorem~\ref{thm:intro:star}. 
Let $B_0,\ldots, B_b$ be the partition of $B$ such that $B_0$ consists all leaf vertices of $T$ in $B$, every $B_i$ with $0 < i < b$ consists of one vertex of $T$ in $B$ with degree $2$, and $B_b$ either consists of all vertices of $T$ in $B$ with degree at least $3$ or consists of one vertex of $T$ in $B$ with degree $2$. Let $m_i=\sum_{v\in B_i}d_T(v)$. Notice that $m_0$ denotes be the number of leaf vertices of $T$ in $B$,
and $m_1=\cdots =m_{b-1}=2$, and $b-1$ or $b$ denotes the number of vertices of $T$ with degree $2$ in $B$.
Note also that $m-m_b\ge m_0$.
Since $T$ contains $\max_{v\in A \cap I(T)}(d_T(v)-2)+\max_{v\in B \cap I(T)}(d_T(v)-2)+2$ leaf vertices, 
there are at least $\max_{v\in A \cap I(T)}(d_T(v)-2)+1$ leaf vertices in $A$ or at least
$\max_{v\in B \cap I(T)}(d_T(v)-2)+1$ leaf vertices in $B$.
We may therefore assume that $m_0 \ge \sum_{v\in B \cap I(T)}(d_T(v)-2)+1$.
If $m_b \neq 2$, then 
$$3(m_0-1)\ge \sum_{v\in B \cap I(T)}3(d_T(v)-2) = \sum_{v\in B_b}3(d_T(v)-2) \ge \sum_{v\in B_b}d_T(v) = m_b,$$
which implies that (regardless of $m_b=2$ or not)
$$ \lfloor \frac{9}{2}m \rfloor \ge 
\max\{ \frac{3m}{2}, \frac{3m(3m_0-3)}{2m_0}\}+\frac{2m}{m_0} \ge \frac{3mm_b}{2\min \{m_b, m-m_b\}}+ \frac{2m}{m_0}.$$
Thus by Corollary~\ref{cor:parity:factorization}, we can decompose $G$ into factors $G_{0},\ldots, G_{b}$
such that for each vertex $v$, $d_{G_{i}}(v)\ge m_il(v)$ when $i\in \{0,\ldots, b\}$, for each $v\in X$, $d_{G_{i}}(v)=\frac{m_i}{m}d_G(v)$ when $i\in \{0,\ldots, b\}$, and 
for each $v\in Y$, $d_{G_{i}}(v)$ is divisible by $m_i$ when $i\in \{1,\ldots, b\}$.
Therefore, by Theorem~\ref{preliminary:thm:equitable:decomposition}, the graph $G$ admits a $(B,Y)$-compatible $T^*$-edge-decomposition such that for any pair $v\in X$ and $t\in A$ and any pair $v\in Y$ and $t\in B_i$, there are $l(v)$ homomorphic copies of $T$ including $v$ with the preimage $t$. 
Hence $G$ admits a $(B,Y)$-compatible $T^*$-edge-decomposition with the desired properties.
If $girth(G) > diam(T)$ (regardless of $l=0$ nor not), then 
every homomorphic copy of $T$ must be an isomorphic copy and so $G$ admits a $(B,Y)$-compatible $T$-edge-decomposition automatically.
If $l=10^{50m}$ and $G$ is a simple graph, then according to Theorem~\ref{preliminary:thm:equitable:decomposition}, $G$ admits a $(B,Y)$-compatible $T$-edge-decomposition.
Hence the theorem holds.
}\end{proof}
The following theorem reduces the edge-connectivity needed in Theorem~\ref{thm:bipartite:semiregular} for a special type of trees.
\begin{thm}\label{thm:subdivision-tree}
{Let $T$ be a tree of size $m$ with bipartition $(A,B)$. Assume that $B$ consists of $m_0$ leaves and some vertices with degree $2$, where $0 < m_0 < m$. Let $G$ be a $2$-edge-connected bipartite simple graph in which vertices in one side have degrees divisible by $m$. Then $G$ admits a $(B,Y)$-compatible $T$-edge-decomposition, if at least one of the following properties holds:
\begin{enumerate}{
\item [$\bullet$] 
 $G$ is odd-$\lceil \frac{m}{m_0}\rceil$-edge-connected and $girth(G) > diam(T)$.

\item [$\bullet$] 
$G$ is odd-$\lceil \frac{m}{m_0}\rceil$-edge-connected and $\delta(G) \ge m10^{51m}$.
}\end{enumerate}
}\end{thm}
\begin{proof}
{Let $(X,Y)$ be a bipartition of $G$ in which vertices in $X$ have degrees divisible by $m$. 
For convenience, let us define $m_1= \cdots= m_{b}=2$ where $b=(m-m_0)/2$.
Since $G$ is odd-$\lceil \frac{m}{m_0}\rceil$-edge-connected and $\delta(G) \ge m10^{51m}$, 
by applying Corollary~\ref{cor:parity}, the graph $G$ 
can be edge-decomposed into factors $G_0, \ldots, G_b$ such that
for each vertex $v$, $d_{G_i}(v)\ge m_i10^{51m}-1\ge m_i10^{50m}$ when $i\in\{0\ldots, b\}$,
for each $v\in X$, $d_{G_i}(v)=\frac{m_i}{m}d_G(v)$ when $i\in\{0\ldots, b\}$, 
and for each $v\in Y$, $d_{G_i}(v)$ is even when $i\in\{1\ldots, b\}$.
Thus the assertion follows from Theorem~\ref{preliminary:thm:equitable:decomposition}.
The proof of the first assertion is also similar.
}\end{proof}
%
%
%
%
%
\subsection{Bipartite graphs with large girth or minimum degree}

The following lemma is an important tool for decomposing graphs into two highly partition-connected modulo semi-regular factors $G_1$ and $G_2$ which was originally introduced in \cite[Proposition 2]{Thomassen:bistars:2013} with a weaker version.
Note that two similar versions are also established in \cite[Lemma 2.5]{Botler-Mota-Oshiro-Wakabayashi-2016}, and \cite[Proposition 6]{Merker-2017}.
\begin{lem}\label{lem:G1G2:semi-regular}
{Let $G$ be a bipartite graph with bipartition $(V_1, V_2)$.
Let $\lambda$ be a nonnegative integer, let $l$ be a nonnegative integer-valued function on $V(G)$.
If $G$ is $(2m-2+2\lambda, 2l)$-partition-connected and $|E(G)|$ is divisible by $m$, then $G$ admits an edge-decomposition into two $(\lambda,l)$-partition-connected factors $G_1$ and $G_2$ such that for each $v\in V_i$, $d_{G_i}(v)$ is divisible by $m$.
}\end{lem}
\begin{proof}
{Decompose $G$ into three factors $H_1$, $H_2$, and $H$ such that $H_i$ is $(\lambda_i,l_i)$-partition-connected and $H$ is $(2m-2)$-tree-connected. For each $v\in V_i$, define $p(v)=-d_{H_i}(v)$ (modulo $m$). Since $|E(G)|$ is divisible by $m$, we must have
$|E(H)|=|E(G)|-|E(H_1)|-|E(H_2)|
\stackrel{m}{\equiv}\sum_{v\in V_1} p(v)+\sum_{v\in V_2} p(v)=\sum_{v\in V(H)} p(v)
$. Thus by Corollary 4.5 in \cite{Modulo-Orientations-Bounded}, the graph $H$ has an orientation 
such that for each vertex $v$, $d_H^+(v) \stackrel{m}{\equiv} p(v)$. 
Let $F_i$ be the factor of $H$ consisting of all edges directed away from $V_i$. It is easy to check that $G_1=H_1\cup F_1$ and $G_2=H_2\cup F_2$ are the desired factors that we are looking for.
}\end{proof}
The following theorem develops Theorem~\ref{thm:bipartite:semiregular} to arbitrary bipartite graphs but requires to increase the necessary partition-connectivity by a constant factor.
\begin{thm}\label{thm:base:bipartite:decomposition}
{Let $T$ be a tree of size $m$.
A bipartite simple graph $G$ admits a $T$-edge-decomposition, if at least one of the following conditions holds:
\begin{enumerate}{
\item [$\bullet$] 
 $ girth(G) >diam(T)$ and $G$ is $11m$-tree-connected.
\item [$\bullet$] 
$diam(T)= 3$ and $G$ is $5m$-tree-connected.

\item [$\bullet$] 
$G$ is $(11m, 4m10^{50m})$-partition-connected.
}\end{enumerate}
Moreover, every $(11m, 4ml)$-partition-connected bipartite graph $G$ admits a $T^*$-edge-decomposition such that for any pair $v\in V(G)$ and $t\in V(T)$, there are $l(v)$ homomorphic copies of $T$ including $v$ with the preimage $t$, where $l$ is a nonnegative integer-valued function on $V(G)$.
}\end{thm}
\begin{proof}
{Let $(V_1,V_2)$ be the bipartition of $V(G)$. 
Assume that $diam(T) =3$.
By Lemma~\ref{lem:G1G2:semi-regular}, we can decompose $G$ into two 
$\lceil \frac{3}{2}m\rceil$-tree-connected factors $G_1$ and $G_2$
such that for each $v\in V_i$, $d_{G_i}(v)$ is divisible by $m$.
Therefore, by Theorem~\ref{thm:bipartite:semiregular}, the graph $G_i$ admits a $T$-edge-decomposition and
so does $G$. 
Now, assume that $G$ is $(11m, 4ml)$-partition-connected.
By Lemma~\ref{lem:G1G2:semi-regular}, we can decompose the graph $G$ into two factors $G_1$ and $G_2$
such that $G_i$ is $(\lfloor \frac{9}{2}m \rfloor, 2ml)$-partition-connected for each $v\in V_i$, $d_{G_i}(v)$ is divisible by $m$.
Therefore, by Theorem~\ref{thm:bipartite:semiregular}, the graph $G_i$ admits a $(B,V_i)$-compatible $T^*$-edge-decomposition such that for any pair $v\in V_i$ and $t\in A$ and any pair $v\in V(G)\setminus V_i$ and $t\in B$,
 there are $l(v)$ homomorphic copies of $T$ including $v$ with the preimage $t$. 
Hence $G$ admits a $T^*$-edge-decomposition with the desired properties.
If $girth(G) > diam(T)$ (regardless of $l=0$ nor not), then 
every homomorphic copy of $T$ must be an isomorphic copy and so $G$ admits a $T$-edge-decomposition automatically.
If $l=10^{50m}$ and $G$ is a simple graph, then according to Theorem~\ref{preliminary:thm:equitable:decomposition}, the graph $G_i$ admits a $T$-edge-decomposition and
so does $G$.
Hence the theorem holds.
}\end{proof}
%
%
%
%
%
%
%
\section{Graphs with sufficiently large girth}
In this section, we are going to develop the first item of Theorem~\ref{thm:base:bipartite:decomposition} to arbitrary graphs.
Before stating this result, let us establish the following lemma for working with graphs with small bipartite index.
\begin{lem}\label{lem:edge-extension}
{Let $G$ be a simple graph with a factor $M$, and let $T$ be a tree of size $m$.
If $girth(G)> diam(T)$ and $\delta(G)\ge |E(M)|+2m$, then $M$ can be extended to a subgraph of $G$ having a $T$-edge-decomposition.
In addition, the assertion holds for all trees when $\delta(G)\ge 2|E(M)|+2m^2$.
}\end{lem} 
\begin{proof}
{We may assume there is no copy of $T$ in $M$. Otherwise, we delete it from $M$ and use induction.
Let $t_0$ be a fixed internal vertex of $T$.
We inductively select a copy $M_i$ of a subtree of $T$ including $t_0$ in the remaining factor of $M$ with the maximum size and delete it from $M$.
Following \cite[Page 5]{Barat-Gerbner-2014}, we say that a vertex in $M_i$ is {\it unsaturated} 
if its degree in $M_i$ is less than the degree of its preimage in $T$.
Let $M_j$ be the first subtree including $v$ as an unsaturated vertex.
If $M_{j'}$ is another subtree including $v$ as an unsaturated vertex and $vv_{j'}\in E(M_{j'})$,
then according to the maximality of $M_j$, we must have $v_{j'}\in V(M_j)$. 
Thus there are at most $|V(M_j)|-1$ subtrees including $v$ as an unsaturated vertex.
In addition, there is at most one subtree including $v$ as an unsaturated vertex when $girth(G)> diam(T)$. 
Let $r=1$ when $girth(G)> diam(T)$ and let $r=m-1$ otherwise.
In other words, in both cases, are at most $r$ subtrees including $v$ as an unsaturated vertex.

Let $\mathcal{A}_0$ be a collection of some edge-disjoint copies $T_1,\ldots, T_n$ of subtrees of $T$ such that every $T_i$ contains $M_i$, 
and for every vertex $v$ and every $t\in V(T)$, there are at most $r$ subtrees in $\mathcal{A}_0$ including $v$ with the preimage $t$. 
According to the construction, the collection of all $M_i$ is a candidate for $\mathcal{A}_0$.
We consider $\mathcal{A}_0$ with the maximum size of $T_1\cup \cdots \cup T_n$.
We claim that $\mathcal{A}_0$ is the desired $T$-edge-decomposition. 
Otherwise, there is a subtree $T_i\in \mathcal{A}_0$ having an unsaturated vertex $x$ so that its degree in $T_i$ is less than the degree of its preimage $t_x$ in $T$. 
Let $t_x'$ be the vertex in $V(T)$ incident with $t_x$ such that $t_x$ lies in the path in $T$ connecting $t_0$ to $t'_x$. 
Let $u_1,\ldots, u_p$ be the neighbours of $x$ in $G$ such that every edge $xu_i$ is not contained in subtrees of $\mathcal{A}_0$.
By the assumption on $\mathcal{A}_0$, for every vertex $v$ and every $t\in V(T)$, there are at most $r$ subtrees in $\mathcal{A}_0$ including $v$ with the preimage $t$. 
Thus there are at least $d_G(x)-d_M(x)-2m r+1$ edges incident with $x$ in $G$ which are different from the edges of subtrees in $\mathcal{A}_0$ (whether in $M$ or not). 
This means that $p\ge d_G(x)-d_M(x)-2m r+1$.
If there is at most $r-1$ subtrees in $\mathcal{A}_0$ including $u_j$ with the preimage $t_x'$, then we can add the edge $xu_j$ to $T_i$ to obtain a new collection $\mathcal{A}_0$ with larger size satisfying the desired properties, which is a contradiction. 
Thus for every vertex $u_j$, there are at least $r$ subtrees $T_{1,j},\ldots, T_{r,j}$ in $\mathcal{A}_0$ including $u_j$ with the preimage $t_x'$. Obviously, all subtrees $T_{t,j}$ must be distinct. Thus we have at least $pr$ edges of $M$.
Therefore, $|E(M)|\ge pr\ge r(\delta(G)-d_M(x)-2m r+1)$.
If $r=m-1$, then $2|E(M)|+2m^2 > \delta(G)$, which is a contradiction.
 If $girth(G)> diam(T)$, then every subtree $T_{i, j}$ does not contain edges of $M$ incident with $x$. In this case, 
we have at least $pr+d_M(x)$ edges of $M$ and so $|E(M)|\ge pr+d_M(x)\ge \delta(G)-2m +1$, which is again a contradiction.
}\end{proof}
We also need the following lemma for working with graphs with large bipartite index.
\begin{lem}{\rm (\cite{EquitableFactorizations-Highly})}\label{lem:bi:divisible-by-m}
{If $G$ is a $(2m-2+3\lambda)$-tree-connected graph of size divisible by $m$ and $bi(G)\ge \lambda$, then every vertex $v$ can be split into two vertices $v_1$ and $v_2$ such that resulting graph $H$ is a bipartite $\lambda$-tree-connected graph with bipartition $(V_1,V_2)$ for which $V_i=\{v_i:v\in V(G)\}$
and for each $v\in V(G)$, $d_{H}(v_1)$ is divisible by $m$.
}\end{lem}
Now, we are in a position to prove the second part of Theorem~\ref{intro:thm:quadratic} by giving an explicit upper bound on the necessary edge-connectivity.
\begin{thm}\label{thm:homomorphic:m2}
{Let $T$ be a tree of size $m$.
Let $G$ be a $31m$-edge-connected simple graph with $\delta(G)\ge 35m$.
If $|E(G)|$ is divisible by $m$ and $girth(G) > diam(T)$, then 
then $G$ admits a $T$-edge-decomposition.
}\end{thm}
\begin{proof}
{For notational simplicity, let us write $\lambda=\lfloor \frac{9}{2}m\rfloor $.
By Theorem~\ref{thm:edge-connected-partition-connected}, the graph $G$ is 
$(2m-2+3\lambda,2m)$-partition-connected.
Let $(V_1,V_2)$ be a bipartition of $V(G)$ with $e_{G}(V_1)+e_{G}(V_2)=bi(G)$.
Let $M=G[V_1]\cup G[V_2]$.
First assume that $bi(G)\le \lambda$ and so $|E(M)|\le \lambda$.
Since $|E(M)|\le \lambda$, it is not difficult to check that we can decompose $G$ into two factors $G_1$ and $G_2$
such that $G_1$ is $(2m-2+2\lambda)$-tree-connected and $G_2$ is $(\lambda, 2m)$-partition-connected and it contains $M$ as well.
 More precisely, one can first decompose $G$ into $2m-2+3\lambda$ spanning trees and a $(0, 2m)$-partition-connected factor and next select $2m-2+2\lambda$ spanning trees excluding the edges $M$ in order to construct the graph $G_1$. 
By applying Lemma~\ref{lem:edge-extension} to the graph $G_2$, there are some edge-disjoint copies of $T$ in $G_2$ covering the edges of $M$. Note that $\delta(G_2)\ge \lambda +2m\ge |E(M)|+2m$. By deleting those copies from $G$ the remaining graph is an $11$-tree-connected bipartite graph of size divisible by $m$ with the bipartition $(V_1,V_2)$. 
Thus by Theorem~\ref{thm:base:bipartite:decomposition}, it admits a $T$-edge-decomposition and so does $G$.

Now, assume that $bi(G)\ge \lambda$.
Since $G$ is $(2m-2+3\lambda)$-tree-connected, by Lemma~\ref{lem:bi:divisible-by-m}, 
every vertex $v$ can be split into two vertices $v_1$ and $v_2$ such that resulting graph $H$ is a bipartite $\lambda$-tree-connected graph with bipartition $(V_1,V_2)$ for which $V_i=\{v_i:v\in V(G)\}$
and for each $v\in V(G)$, $d_{H}(v_1)$ is divisible by $m$.
Thus by Theorem~\ref{thm:bipartite:semiregular}, the bipartite graph $H$
admits a $T$-edge-decomposition and so $G$ admits a $T^*$-edge-decomposition. 
Since $girth(G) > diam(T)$, this $T^*$-edge-decomposition must therefore be a $T$-edge-decomposition. 
Hence the proof is completed.
}\end{proof}
\begin{remark}
{Note that the minimum degree condition in Theorem~\ref{thm:homomorphic:m2} can be dropped and the necessary edge-connectivity in Theorem~\ref{thm:main:88m:2power200m} can also be reduced to $31m$. Moreover, we can develop Theorem~\ref{thm:homomorphic:m2} to the case that $girth(G)=diam(T)$ or $diam(T)\le 4$ in $O(m)$-edge-connected graphs with minimum degree at least $O(m^2)$. The proofs need more complicated arguments. 
We present details in a forthcoming paper.
}\end{remark}
%
%
%
%
%
\section{Graphs with sufficiently large minimum degrees}
In this section, we are going to develop the third item of Theorem~\ref{thm:base:bipartite:decomposition} to arbitrary graphs.
%

\subsection{Method 1: extending subtrees using subgraphs}
Thomassen (2013)~\cite{Thomassen:bistars:2013} and independently Bar{\'a}t and Gerbner (2014) \cite{Barat-Gerbner-2014} showed that it is enough to prove Conjecture~\ref{intro:conj:Barat-Thomassen} for bipartite graphs.
Bar{\'a}t and Gerbner used the following lemma in their proof
which implicitly appeared in the proof of Theorem 2 in \cite{Barat-Gerbner-2014}. 
\begin{lem}{\rm (\cite{Barat-Gerbner-2014})}\label{lem:Barat-Gerbner}
{Let $T$ be a tree of size $m$.
Let $G$ be a simple graph 
having nested bipartite factors $F_{m}\subseteq \cdots \subseteq F_{1}$ with the same bipartition $(X,Y)$.
If $\delta(F_{m})\ge m^3$ and for each $v\in V(F_i)$ satisfying $1\le i\le m-1$,
 $d_{F_{i+1}}(v)\le\frac{1}{m}d_{F_{i}}(v)$,
then there exist edge-disjoint copies of $T$ in $ G[X]\cup F_1$ containing all edges of $G[X]$.
}\end{lem}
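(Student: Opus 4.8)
I would split $E(G[X])$ into a "bulk" part that can be handled inside $G[X]$ itself and a sparse "remainder" that must be routed through the auxiliary graphs, and treat these in two phases. Write $m=|E(T)|$. If $m=1$ then every edge of $G[X]$ is already a copy of $T$ and there is nothing to do, so assume $m\ge 2$. Note first that the hypotheses give $d_{H_t}(v)\ge m^{\,m-t+3}$ for every $v$ and every $t$, since $\delta(H_m)\ge m^3$ and each level is at least $m$ times denser than the next; in particular even the sparsest level $H_m$ has minimum degree exceeding $m$.

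\textbf{Phase A (pulling copies out of $G[X]$).} I would repeatedly delete from $G[X]$ an edge-disjoint copy of $T$ that lies entirely inside the current remainder of $G[X]$. This is possible as long as that remainder contains a subgraph of minimum degree at least $m+1$: such a subgraph contains a copy of every tree with $m$ edges, by the usual greedy (BFS) embedding, since at each of the at most $m$ extension steps the image of the parent still has an unused neighbour outside the at most $m$ already-placed vertices. The process terminates; when it stops, the remaining graph $R\subseteq G[X]$ has no subgraph of minimum degree $\ge m+1$, i.e.\ $R$ is $m$-degenerate, so $R$ admits an orientation in which every out-degree is at most $m$ (and in particular $|E(R)|\le m|X|$). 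The copies removed so far use no edges of $H_1$, so they will automatically be edge-disjoint from everything produced in Phase B, and together with the copies of Phase B they will cover $(E(G[X])\setminus E(R))\sqcup E(R)=E(G[X])$.

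\textbf{Phase B (covering $R$ through $H_1$).} Fix a leaf $v_0$ of $T$ with neighbour $u_0$, let $S$ consist of $v_0$ together with the side of the bipartition of $T-v_0$ containing $u_0$; then $S$ is a vertex cover of $T$ with $E(T[S])=\{u_0v_0\}$ and $V(T)\setminus S$ independent. For each arc $\overrightarrow{xx'}$ of $R$ I build a copy $T_{xx'}$ of $T$ in $R\cup H_1$ as follows: put $u_0\mapsto x$ and $v_0\mapsto x'$ (this uses the arc $xx'\in R$), and then extend the embedding over $T-v_0$ in BFS order rooted at $u_0$, placing each vertex at distance $t$ from $u_0$ on a suitable neighbour, in the graph $H_t$, of the image of its parent. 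Since the even distances land in $X$ and the odd ones in $Y$ (because $u_0\in S$), the remaining $m-1$ edges all lie in $H_1$, so $T_{xx'}$ is indeed a copy of $T$ in $G[X]\cup H_1$. I process the arcs one by one, always choosing images so that each $\phi$ stays injective, no edge already used by a previous copy is reused, and the "load" is balanced, i.e.\ no vertex accumulates too many used edges at any level $H_t$.

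\textbf{Main obstacle.} The heart of the argument is showing that Phase B never stalls: at each extension step the current parent image $z$ has $d_{H_t}(z)\ge m^{\,m-t+3}$ neighbours in $H_t$, and we must find among them at least $m+1$ that are not yet images in the copy being built, that are joined to $z$ by a still-unused $H_t$-edge, and that are not already over-loaded at level $t$. Turning the global information "$R$ is $m$-degenerate" (hence at most $m|X|$ arcs, each anchoring one copy that uses at most $m$ edges of each level) into the local per-vertex bound needed here is the delicate point; in particular one must rule out that all the level-$t$ neighbours of some parent image are simultaneously saturated. The resource that makes this work is precisely the geometric gap between consecutive levels together with $\delta(H_m)\ge m^3$: a counting of how many copies placed so far can meet a fixed vertex at a fixed depth, combined with the fact that the degree budget at level $t$ is larger than the budget at level $t+1$ by a factor $m$, should close the induction — but carrying out this accounting cleanly, and choosing the load threshold at each level so that both the "unused edge at the parent" condition and the "unloaded child" condition survive, is where the real work lies.
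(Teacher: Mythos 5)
First, note that the paper does not prove this lemma at all: it is quoted as appearing implicitly in the proof of Theorem~2 of Bar\'at--Gerbner, so there is no in-paper proof to match your argument against. Judged on its own, your proposal has a sensible architecture --- Phase~A (peel off copies of $T$ inside $G[X]$ until the remainder $R$ is $m$-degenerate, then orient $R$ with out-degree at most $m$) is correct and is exactly the right way to tame the unbounded degrees of $G[X]$, and anchoring each remaining arc at a pendant edge $u_0v_0$ so that the head costs nothing is also right. But the proposal is not a proof: your ``Main obstacle'' paragraph defers the entire load-balancing count that shows the greedy embedding never stalls, and that count \emph{is} the content of the lemma. A sketch that ends with ``this is where the real work lies'' has not done the work.

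More seriously, the one concrete choice you do commit to --- drawing the depth-$t$ edge of each copy from $H_t$ --- points the nesting in the wrong direction, and the deferred accounting would fail with it. At a depth-$1$ image $y$, nothing better than edge-disjointness bounds the number of copies through $y$, so up to $d_{H_1}(y)$ copies may arrive there, each demanding up to $m-2$ further edges at $y$; but your supply for depth-$2$ edges at $y$ is $d_{H_2}(y)\le d_{H_1}(y)/m$, a shortfall of order $m^2$ that no local greedy or tie-breaking rule can absorb (it could only be rescued by a global flow argument capping the depth-$1$ load at $y$ by roughly $d_{H_2}(y)/m$, which you neither state nor verify, and which must then be compounded through every level). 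The hypotheses are designed for the opposite assignment: the root's children should be drawn from the \emph{sparsest} graph $H_m$ --- its minimum degree $m^3$ is enough precisely because Phase~A guarantees at most $m$ copies per root, hence a demand of at most $m(m-1)$ there --- and each successive depth should draw from the next \emph{denser} graph (say from $H_{m-t}\setminus H_{m-t+1}$ at depth $t+1$, which also keeps the levels edge-disjoint). Then the number of copies through a vertex at depth $t$ is at most $d_{H_{m-t+1}}(v)$, each demands at most $m-1$ children, and the supply $d_{H_{m-t}}(v)\ge m\,d_{H_{m-t+1}}(v)$ exceeds the demand with slack at least $d_{H_m}(v)\ge m^3>m$, which is what lets the greedy step avoid the already-placed vertices. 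With the levels reversed the induction closes; as you have written it, it does not.
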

We here present a simpler proof for Theorem~\ref{intro:thm:quadratic} based on Lemma~\ref{lem:Barat-Gerbner} but by replacing a factorial upper bound on the necessary minimum degree.
\begin{thm}\label{thm:simple-graph:T-decomposition}
{Let $T$ be a tree of size $m$. If $G$ is a $44m$-edge-connected simple graph of size divisible by $m$ 
and $\delta(G) \ge m^{200m }$, then $G$ admits a $T$-edge-decomposition.
}\end{thm}
\begin{proof}
{We may assume that $m\ge 2$. By Theorem~\ref{thm:edge-connected-partition-connected}, the graph $G$ is 
$(22m,2l)$-partition-connected, where $l=2m 10^{50m}+2(m+1)^{m+2}$.
According to Theorem~\ref{thm:bipartite}, there is a bipartition $(V_1,V_2)$ of $V(G)$ such that $G[V_1,V_2]$ is $(11m,l)$-partition-connected.
Decompose $G[V_1,V_2]$ into a $(11m,2m 10^{50m})$-partition-connected graph $G_0$ 
and two $(0,(m+1)^{m+2})$-partition-connected factors $H_1$ and $H_2$.
By applying lemma~\ref{lem:Werra} with $k=1/(m+1)$ repeatedly, we can find nested factors 
$F_{m, i}\subseteq \cdots \subseteq F_{1, i}$ of $H_i$
such that for each $v\in V(F_s)$ with $1\le s\le m-1$,
 $$(m+1)^{m-s+2}\le \lfloor \frac{1}{m+1} d_{F_s}(v) \rfloor
\le d_{F_{s+1}}(v)\le \lceil \frac{1}{m+1} d_{F_s}(v)\rceil .$$
Note that $\delta(F_m)\ge (m+1)^3$.
In addition, $d_{F_{s+1}}(v) \le \frac{1}{m+1} d_{F_s}(v)+ 1 \le \frac{1}{m} d_{F_s}(v)$, since $\delta(F_s)\ge (m+1)^2$.
Therefore, by Lemma~\ref{lem:Barat-Gerbner}, there exist some edge-disjoint copies of $T$ in $ G[V_i]\cup H_i$ containing all of the edges of $G[V_i]$.
Let $H$ be the factor of $G$ consisting of the edges of $H_1$ and $H_2$ which did not appear in those copies of $T$.
By Theorem~\ref{thm:base:bipartite:decomposition}, the bipartite graph $G_0\cup H$ admits a $T$-edge-decomposition and so does $G$.
}\end{proof}
%
%
%
%
\subsection{Method 2: extending subtrees using isomorphic trees}
In order to improve the lower bound on the minimum degree in Theorem~\ref{thm:simple-graph:T-decomposition} to an exponential bound, we need to recall Lemma 10 in \cite{Bensmail-Harutyunyan-Le-Merker-Thomasse-2017} for the special case $\varepsilon=1/2$ as follows.
\begin{lem}{\rm (\cite{Bensmail-Harutyunyan-Le-Merker-Thomasse-2017})} \label{lem:Lemma10}
{Let $T$ be a tree of size $m$ and let $\delta$ be a real number with $0< \delta < 1$.
Let $G$ be a bipartite simple graph of size divisible by $m$.
If $G$ admits a $T^*$-edge-decomposition $\mathcal{A}^*$ such that for any pair $v\in V(G)$ and $t\in V(T)$, 
$|{\mathcal{A}}(v,t)|\ge (10m)^{18}2^6 \delta ^{-6}$, 
then $G$ contains a factor $H$ having a $T$-edge-decomposition $\mathcal{A}$ satisfying the following properties:
\begin{enumerate}{
\item [$\bullet$] 
For any pair $v\in V(G)$ and $t\in V(T)$, $|\mathcal{A}(v,t)|\ge \frac{1}{2} |\mathcal{A}^*(v,t)|$.
\item [$\bullet$] 
For any triple $v, w\in V(G)$ and $t\in V(T)$ with $v\neq w$, $|\mathcal{A}(v,t) \cap \mathcal{A}(w)| \le \delta |\mathcal{A}(v,t)|$.
}\end{enumerate}
}\end{lem}
Now, we are in a position to prove the first part of Theorem~\ref{intro:thm:quadratic} by giving explicit upper bounds on the necessary edge-connectivity and minimum degree.
\begin{thm}\label{thm:main:88m:2power200m}
{Let $T$ be a tree of size $m$. If $G$ is a $88m$-edge-connected simple graph of size divisible by $m$ 
and $\delta(G) \ge 2^{200m}$, then $G$ admits a $T$-edge-decomposition.
}\end{thm}
\begin{proof}
{We may assume that $m\ge 2$. Note that $\delta(G) \ge 2^{200m} \ge 32ml_0+16ml+88m+4$, where $l_0=(10m)^{18}2^6(2m^3)^6$ and $l=10^{50m}$.
By Theorem~\ref{thm:edge-connected-partition-connected}, the graph $G$ is 
$(44m,16ml_0+8ml+2)$-partition-connected.
According to Theorem~\ref{thm:bipartite}, 
there is a bipartition $(V_1,V_2)$ of $V(G)$ such that $G[V_1,V_2]$ is $(22m , 8ml_0+4ml+1)$-partition-connected. 
We decompose $G[V_1,V_2]$ into an $(11m, 8ml_0+1)$-partition-connected factor $G_1$
 an $(11m,4ml)$-partition-connected factor $G_2$.
We may assume that $G_1$ is $(11m, 8ml_0)$-partition-connected and its size is divisible by $m$ 
by transferring at most $m$ edges from $G_1$ to $G_2$ (if necessary).
Thus by Theorem~\ref{thm:base:bipartite:decomposition}, the graph $G_1$ admits a $T^*$-edge-decomposition $\mathcal{A}^*$ such that for any pair $v\in V(G)$ and $t\in V(T)$, $|\mathcal{A^*}(v,t)|\ge 2l_0$.
By applying Lemma~\ref{lem:Lemma10} with setting $\delta=1/2m^3$, the graph 
$G_1$ contains a factor $G'_1$ having a $T$-edge-decomposition $\mathcal{A}$ such that
 for any pair $v\in V(G)$ and $t\in V(T)$, $|\mathcal{A}(v,t)|\ge \frac{1}{2} |\mathcal{A}^*(v,t)|\ge l_0$, and
for any triple $v, w\in V(G)$ and $t\in V(T)$, $|\mathcal{A}(w) \cap \mathcal{A}(v,t)| \le \frac{1}{2m^3} |\mathcal{A}(v,t)|$.
For a copy $T'$ of a subtree of $T$ and $v\in V(T')$, we denote below by $t(T', v)$ the preimage of $v$. Note that $t(T', v)\in V(T)$ while
$v\in V(T')$.

First, we arbitrarily delete copies of $T$ from $G[V_1]\cup G[V_2]$ as long as possible. 
Let $t_0$ be a fixed internal vertex of $T$.
Next, in the $i$-th step, we select a copy $\mathcal{T}_i$ of a subtree of $T$ including $t_0$
 in the remaining factor of $G[V_1]\cup G[V_2]$ with the maximum size and delete it from $G$.
We say that a vertex in $\mathcal{T}_i$ is {\it unsaturated} 
if its degree in $\mathcal{T}_i$ is less than the degree of its preimage in $T$.
Let $\mathcal{T}_j$ be the first subtree including $v$ as an unsaturated vertex.
If $\mathcal{T}_{j'}$ is another subtree including $v$ as an unsaturated vertex and $vv_{j'}\in E(\mathcal{T}_{j'})$,
 then according to the maximality of $\mathcal{T}_j$, we must have $v_{j'}\in V(\mathcal{T}_j)$. 
Thus there are at most $m$ (more precisely, at most $V(\mathcal{T}_j)$) subtrees including $v$ as an unsaturated vertex.

Let $S$ be the set of all pairs $(\mathcal{T}, v)$ such that $v$ is an unsaturated vertex in a removed subtree $\mathcal{T}$.
Define $P$ to be the bipartite simple graph with $V(P)=S\cup \mathcal{A}$ 
such that every $(\mathcal{T},v)\in S$ is adjacent to all elements in $\mathcal{A}(v, t(\mathcal{T}, v))\subseteq \mathcal{A}$.
Since every vertex is unsaturated in at most $m$ removed subtrees $\mathcal{T}$, every 
$T'$ in $\mathcal{A}$ is adjacent with at most $m|I(T)|$ elements in $S$ in the bipartite graph $P$.
Thus by Lemma~\ref{lem:Werra}, there is a factor $M$ in $P$ such that for all $s\in S$, 
$d_{M}(s)\ge \lfloor \frac{1}{m|I(T)|}d_{P}(s)\rfloor$
and for all $a\in \mathcal{A}$, $d_{M}(a)\le \lceil \frac{1}{m|I(T)|}d_{P}(a)\rceil \le 1$.
This means that for every $(\mathcal{T},v)\in S$, there is a subset $\mathcal{B}(\mathcal{T},v)$ of 
$\mathcal{A}(v, t(\mathcal{T},v))$ 
such that $|\mathcal{B}(\mathcal{T},v)|\ge \frac{1}{m|I(T)|}|\mathcal{A}(v, t(\mathcal{T},v))|-1$ and all of $\mathcal{B}(\mathcal{T},v)$ are disjoint. 
We are going to extend every removed subtree $\mathcal{T}$ to a copy of $T$ using the edges of trees in sets $\mathcal{B}(\mathcal{T},v)$, where $v$ is an unsaturated vertex in $\mathcal{T}$.
Let $u_1,\ldots, u_n$ be unsaturated vertices of $\mathcal{T}$.
We construct inductively nested trees $\mathcal{T}_i$ which are isomorphic to copies of subtrees of $T$.
In addition, $v_i,\ldots, v_n$ are unsaturated in them.
First, we set $\mathcal{T}_0=\mathcal{T}$. Let $i\in \{0,\ldots, n\}$.
For notational simplicity, let us write $t=t(\mathcal{T},v_{i})$. Thus
$$|\mathcal{B}(\mathcal{T}, v_{i})\setminus \cup_{w\in V(\mathcal{T}_i)\setminus \{v_i\}} ( \mathcal{A}(v_{i},t) \cap \mathcal{A}(w))|
\ge 
\frac{1}{m|I(T)|}|\mathcal{A}(v_{i}, t)|-1 -\frac{m}{2m^3} |\mathcal{A}(v_{i},t)|\ge \frac{1}{2m^2}|\mathcal{A}(v_{i},t)| -1>0.$$
Thus there is a tree $T_i\in \mathcal{B}(\mathcal{T}, v_{i})$ such that $V(T_i)\cap V(\mathcal{T}_i)=\{v_{i}\}$.
We here denote by $T(t_0, t)$ the component of $T-e$ including $t$, where $e$ is the unique edge incident with $t$ lying in the path of $T$ connecting $t_0$ and $t$.
Let $T'_i$ be the subtree of $T_i$ consisting of the images of the edges of $T(t_0, t)$. Now, we define $\mathcal{T}_{i}=\mathcal{T}_{i-1}\cup T'_i$.
By repeating this process, we obtain a tree $\mathcal{T}_{n}$ which is isomorphic to $T$ and it includes $\mathcal{T}$.
Since all of $\mathcal{B}(\mathcal{T},v)$ are disjoint, all of such trees $\mathcal{T}_{n}$ are edge-disjoint. 

In other words, there are some edge-disjoint copies of $T$ in $ G[V_1]\cup G[V_2]\cup G_1$ covering all of the edges of $G[V_1]\cup G[V_2]$. If we delete these copies, the resulting graph $G'$ contains $G_2$ and its size must be divisible by $m$.
So, $G'$ is $(11m, 4m10^{50m})$-partition-connected. Therefore, by Theorem~\ref{thm:base:bipartite:decomposition}, the bipartite graph $G'$ admits a $T$-edge-decomposition and so does $G$. Hence the proof is completed.
}\end{proof}
%
%
%
%
%
%
%
%
%
%

\end{document}